\documentclass[11pt]{amsart}

\usepackage{amssymb,amsmath,latexsym, amscd, graphicx, multirow, longtable, appendix}
\input cyracc.def

\newtheorem{theorem}{Theorem}[section]
\newtheorem{lemma}[theorem]{Lemma}
\newtheorem{proposition}[theorem]{Proposition}

\newtheorem{corollary}[theorem]{Corollary}

\newtheorem{example}[theorem]{Example}

\newtheorem{question}[theorem]{Question}
\def\a{\alpha}

\def\e{\epsilon}

\def\C{\mathbb{C}}

\def\Q{\mathbb{Q}}
\def\R{\mathbb{R}}
\def\Z{\mathbb{Z}}

\begin{document}

\title{Ordered groups, eigenvalues, knots, surgery and L-spaces}

\begin{abstract}  We establish a necessary condition that an automorphism of a nontrivial finitely generated bi-orderable group can preserve a
bi-ordering:  at least one of its eigenvalues, suitably defined, must be real and positive.  Applications are given to knot theory, spaces which fibre over the circle and to the Heegaard-Floer homology of surgery manifolds.  In particular, we show that if a nontrivial fibred knot has bi-orderable knot group, then its Alexander polynomial has a positive real root.  This implies that many specific knot groups are not bi-orderable.  We also show that if the group of a nontrivial knot is bi-orderable, surgery on the knot cannot produce an $L$-space, as defined by Ozsv\'ath and Szab\'o. 

\end{abstract}

\date{\today}

\author[Adam Clay and Dale Rolfsen]{Adam Clay and Dale Rolfsen}
\address{Department of Mathematics\\
University of British Columbia \\
Vancouver \\
BC Canada V6T 1Z2} \email{aclay@math.ubc.ca \\ rolfsen@math.ubc.ca}
\urladdr{http://www.math.ubc.ca/~aclay/ \\ http://www.math.ubc.ca/~rolfsen/} 
\maketitle

\section{Introduction}
Orderable groups have recently found interesting applications in topology, for example in the study of foliations and similar structures on $3$-dimensional manifolds \cite{calegari, calegari-dunfield, roberts}, the existence of mappings of nonzero degree \cite{BRW, rol}, in the theory of braids \cite{dehornoy,DDRW}, knot theory \cite{ito, malyut} and dynamics \cite{ghys, navas}.  There is also evidence that Heegaard-Floer homology is connected with orderability of the fundamental group of a closed 3-manifold \cite{OS, peters, watson}.  In this paper we provide further evidence.

It is known that all knot groups are left-orderable \cite{BRW, howie-short} and that some knot groups enjoy orderings which are invariant under multiplication on both sides.  In \cite{PR1} it was shown that if a fibred knot's Alexander polynomial has {\em all} roots real and positive, then the knot group will be bi-orderable.  One of the main results of the present article is a partial converse:

\begin{theorem}  Suppose that $K$ is a nontrivial fibred knot in $S^3$ and the group $\pi_1(S^3 \setminus K)$ is bi-orderable. Then the Alexander polynomial $\Delta_K(t)$ must have at least one root (actually two) which is real and positive. 
\end{theorem}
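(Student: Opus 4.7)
The plan is to exploit the fibred structure of the knot exterior to reduce the claim to the main theorem from the abstract (proved earlier in the paper) that a bi-ordering-preserving automorphism of a nontrivial finitely generated bi-orderable group must have a positive real eigenvalue.

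First, I would recall the standard consequence of being fibred: the exterior of $K$ fibres over $S^1$ with fibre a Seifert surface $F$ of genus $g \geq 1$ (nontriviality of $K$ ensures $g \geq 1$, so $\pi_1(F)$ is a nontrivial free group of rank $2g$). This yields a split short exact sequence
\[
1 \longrightarrow \pi_1(F) \longrightarrow \pi_1(S^3 \setminus K) \longrightarrow \Z \longrightarrow 1,
\]
so $\pi_1(S^3 \setminus K) \cong \pi_1(F) \rtimes_\phi \Z$ where $\phi \colon \pi_1(F) \to \pi_1(F)$ is the monodromy. Moreover, up to units in $\Z[t^{\pm 1}]$, the Alexander polynomial $\Delta_K(t)$ equals the characteristic polynomial of the induced map $\phi_* \colon H_1(F;\Z) \to H_1(F;\Z)$, so the roots of $\Delta_K$ are exactly the eigenvalues of $\phi_*$.

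Second, assuming $\pi_1(S^3 \setminus K)$ admits a bi-ordering, I would restrict it to $\pi_1(F)$ to obtain a bi-ordering of the free group $\pi_1(F)$. Because the monodromy $\phi$ is realised inside $\pi_1(S^3 \setminus K)$ by conjugation by a lift of a generator of $\Z$, and because conjugation in a bi-ordered group always preserves the bi-ordering, $\phi$ preserves the induced bi-ordering on $\pi_1(F)$. Since $\pi_1(F)$ is a nontrivial finitely generated bi-orderable group, the main theorem applies and guarantees that $\phi_*$ has a real positive eigenvalue; equivalently, $\Delta_K(t)$ has a real positive root.

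Third, to upgrade "at least one" to "at least two", I would invoke the symmetry $\Delta_K(t) \doteq t^{2g}\Delta_K(t^{-1})$, which forces roots to come in pairs $\{\lambda, \lambda^{-1}\}$. A positive real root $\lambda$ thus produces a second positive real root $\lambda^{-1}$, and these coincide only if $\lambda = 1$; but $\Delta_K(1) = \pm 1 \neq 0$, so they are distinct.

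The main obstacle is conceptual rather than technical: one must be comfortable that the induced $\Z$-action by conjugation on the normal subgroup $\pi_1(F)$ really does preserve the restricted bi-ordering, and that "eigenvalue suitably defined" in the hypothesis of the main theorem specialises, for a free group, to an eigenvalue of the abelianised action — which is exactly what makes the bridge to roots of $\Delta_K(t)$ work. Once these identifications are in place, the argument is essentially a direct application of the main theorem.
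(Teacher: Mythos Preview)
Your proposal is correct and follows essentially the same route as the paper. The paper packages the argument into intermediate results (Proposition~\ref{HNN} and Corollary~\ref{monodromy}), but the content is identical: restrict the bi-ordering to the fibre group, observe that the monodromy acts by conjugation and hence preserves the ordering, and then invoke Theorem~\ref{poseval}; your treatment of the ``actually two'' clause via the symmetry of $\Delta_K$ and $\Delta_K(1)=\pm 1$ is also exactly what the paper does.
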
  

That criterion establishes that many fibred knots' groups cannot be bi-ordered.  See the last section of this paper for examples.
A consequence of this pertains to Heegaard-Floer homology.  Ozsv\'ath and Szab\'o \cite{OS2} define an {\em L-space} to be a closed 3-manifold $M$ such that $H_1(M; \Q) = 0$ and its Heegaard-Floer homology 
$\widehat{HF}(M)$ is a free abelian group of rank equal to $|H_1(M; \Z)|$.  Lens spaces, and more generally 
3-manifolds with finite fundamental group are examples of $L$-spaces.  We are able to use their results and a theorem of Ni \cite{ni} to show the following.

\begin{theorem}\label{L-space}  Suppose $K$ is a nontrivial knot in $S^3$ and the knot group $\pi_1(S^3 \setminus K)$ is bi-orderable. 
Then surgery on $K$ cannot produce an $L$-space. 
\end{theorem}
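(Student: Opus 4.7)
The plan is to argue by contradiction: I would assume that some surgery on $K$ yields an $L$-space and combine Theorem 1.1 with two deep results from the literature to derive a contradiction on the roots of $\Delta_K(t)$. The first input is Ni's theorem \cite{ni}, which asserts that any knot in $S^3$ admitting an $L$-space surgery is fibred. Applied together with the standing hypothesis that $\pi_1(S^3\setminus K)$ is bi-orderable, this places us squarely in the setting of Theorem 1.1 and forces $\Delta_K(t)$ to possess a positive real root.

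The second input is a structural theorem of Ozsv\'ath and Szab\'o: the Alexander polynomial of any knot admitting an $L$-space surgery has all nonzero coefficients equal to $\pm 1$ and alternating in sign. After clearing negative powers of $t$, this means
\[
\Delta_K(t) = \sum_{i=0}^{2s}(-1)^{i} t^{n_i}, \qquad n_0<n_1<\cdots<n_{2s}.
\]
The core calculation is then to show that no such polynomial has a positive real root. For $t>1$ one regroups
\[
\Delta_K(t) = t^{n_0} + \sum_{j=1}^{s}\bigl(t^{n_{2j}}-t^{n_{2j-1}}\bigr),
\]
where every summand is strictly positive because $n_{2j}>n_{2j-1}$ and $t>1$. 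For $0<t<1$ one uses the dual regrouping
\[
\Delta_K(t) = \sum_{j=0}^{s-1}\bigl(t^{n_{2j}}-t^{n_{2j+1}}\bigr) + t^{n_{2s}},
\]
whose summands are strictly positive since $t^{n_{2j}}>t^{n_{2j+1}}$ for $0<t<1$. At $t=1$ the alternating sum evaluates directly to $\Delta_K(1)=1$. Thus $\Delta_K(t)>0$ throughout $(0,\infty)$.

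These two conclusions are incompatible: bi-orderability, funnelled through Ni's fibredness theorem and Theorem 1.1, forces a positive real root of $\Delta_K$, while the $L$-space hypothesis, funnelled through Ozsv\'ath--Szab\'o, forbids one. This is the desired contradiction. The main obstacle is not the elementary estimate above but rather matching the precise hypotheses of Ni's theorem and of the Ozsv\'ath--Szab\'o polynomial restriction to the ``some surgery on $K$ produces an $L$-space'' formulation used here, so that both conclusions genuinely apply to the knot at hand.
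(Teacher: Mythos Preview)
Your proposal is correct and follows essentially the same strategy as the paper: combine Ni's fibredness theorem with Theorem~1.1 to force a positive real root of $\Delta_K$, then invoke the Ozsv\'ath--Szab\'o constraint on the form of $\Delta_K$ to rule one out. The only difference is in the elementary positivity check: the paper works with the symmetric Laurent form $\Delta_K(t)=(-1)^k+\sum_{j=1}^k(-1)^{k-j}(t^{n_j}+t^{-n_j})$, uses the symmetry $t\leftrightarrow t^{-1}$ to reduce to $\alpha>1$, and then pairs consecutive terms using the monotonicity of $x\mapsto \alpha^x+\alpha^{-x}$, whereas you clear denominators first and handle the ranges $t>1$ and $0<t<1$ by two different regroupings. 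Both arguments are equally elementary; yours avoids the auxiliary calculus lemma at the cost of treating two cases, while the paper's symmetric formulation handles both at once.
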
  

We also derive restrictions on fibred knots for which surgery produces a manifold with bi-orderable fundamental group.

\begin{theorem}\label{surgery}
Suppose $K$ is a nontrivial fibred knot in $S^3$ and nontrivial surgery on $K$ produces a 3-manifold $M$ whose fundamental group is bi-orderable.  Then the surgery must be longitudinal (that is, zero-framed) and  
$\Delta_K(t)$ must have a positive real root.  Moreover, $M$ fibres over $S^1$.
\end{theorem}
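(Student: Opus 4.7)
My plan is to reduce to longitudinal surgery ($p=0$), after which $M$ is automatically a mapping torus and the conclusion about $\Delta_K(t)$ follows from the paper's main eigenvalue theorem applied to the monodromy.

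\emph{Step 1 -- the surgery must be longitudinal.} Write $M = S^3_{p/q}(K)$ with $\gcd(p,q)=1$, so that $H_1(M) \cong \Z/p\Z$. By Gordon--Luecke, nontrivial surgery on a nontrivial knot in $S^3$ cannot yield $S^3$, so $\pi_1(M)$ is nontrivial. The key algebraic input I would use is the following consequence of H\"older's theorem: every nontrivial finitely generated bi-ordered group $G$ admits a nontrivial homomorphism to $(\R,+)$.  Indeed, letting $g$ be a generator of largest absolute value in the order, one can take $C$ to be a maximal convex subgroup of $G$ not containing $g$ (which exists by Zorn's lemma, as the union of a chain of such subgroups is again such a subgroup); the quotient $G/C$ has no nontrivial proper convex subgroup, so it is archimedean and embeds in $\R$.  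Applied to $\pi_1(M)$, this produces a nontrivial homomorphism that factors through $H_1(M)$, forcing $H_1(M)$ to have positive rank. Since $\Z/p$ is torsion for $p\ne 0$, we conclude $p=0$.

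\emph{Step 2 -- fibration structure and the Alexander polynomial.} With $p=0$, the zero-framed surgery construction realises $M$ as the mapping torus of the closed surface $\hat F$ (obtained by capping off the fibre $F$ along the surgery solid torus) with monodromy $\hat\phi$ induced from the fibration monodromy $\phi$ of $S^3\setminus K$. In particular $M$ fibres over $S^1$ and $\pi_1(M)\cong \pi_1(\hat F)\rtimes_{\hat\phi_*}\Z$. The given bi-ordering on $\pi_1(M)$ restricts to a bi-ordering of the normal subgroup $\pi_1(\hat F)$; bi-invariance forces conjugation by the $\Z$-generator, which acts as $\hat\phi_*$, to preserve this restricted ordering. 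Applying the paper's main eigenvalue theorem to the automorphism $\hat\phi_*$ of the nontrivial finitely generated bi-orderable group $\pi_1(\hat F)$ yields a positive real eigenvalue of the induced map on $H_1(\hat F;\R)$. Capping off $\partial F$ (the null-homologous longitude) does not affect first homology, so this is also a positive real eigenvalue of $\phi_*$ on $H_1(F)$, hence a positive real root of $\Delta_K(t)=\det(tI-\phi_*)$.

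\emph{Main obstacle.} The crux is Step 1: one needs the algebraic lemma that bi-orderability of a nontrivial finitely generated group implies a nontrivial $\R$-valued homomorphism.  Once $p=0$ is forced, the remaining identifications---the mapping torus structure on $M$, the transfer of an eigenvalue of $\hat\phi_*$ to a root of $\Delta_K(t)$---are fairly routine consequences of fibred-knot theory and the paper's main theorem, essentially re-running the argument behind Theorem~1.1 with $\pi_1(\hat F)$ in place of $F$.
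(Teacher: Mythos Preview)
Your proof is correct and follows essentially the same route as the paper: compute $H_1(M)\cong\Z/p\Z$, use the maximal proper convex subgroup plus H\"older to force infinite abelianization and hence $p=0$, identify $M$ with the mapping torus of the capped-off fibre, and apply the main eigenvalue theorem to the closed monodromy, whose action on $H_1$ agrees with that of the open fibre. The only cosmetic differences are that the paper invokes Property~P rather than Gordon--Luecke for $\pi_1(M)\ne 1$ (your inference tacitly uses the Poincar\'e conjecture), and it packages your inline convex-subgroup and monodromy arguments as the earlier Lemma~2.4 and Corollary~4.4 rather than reproving them.
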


These are applications of the following, which is our main theorem.

\begin{theorem}  Suppose an automorphism of a nontrivial finitely generated bi-orderable group $G$ preserves a bi-ordering.  Then its induced automorphism on the rational vector space $H_1(G; \Q)$ must have at least one positive real eigenvalue.  
\end{theorem}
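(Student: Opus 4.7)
The plan is to extract the desired positive real eigenvalue from the action of $\varphi$ on a carefully chosen one-dimensional ordered quotient of $G$ built from the bi-ordering itself. Fix a bi-ordering $<$ on $G$ that is preserved by $\varphi$. The convex subgroups of a bi-ordered group form a chain under inclusion, and because $G$ is finitely generated the union of any strictly ascending chain of proper convex subgroups is still proper (a finite generating set would otherwise already be contained in one member of the chain). Hence a unique maximal proper convex subgroup $H \lneq G$ exists.

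The bi-invariance of $<$ forces convex subgroups to be normal: conjugation preserves the ordering, so $gHg^{-1}$ is also a maximal proper convex subgroup, and by uniqueness $gHg^{-1}=H$. Thus $G/H$ inherits a bi-ordering, and by maximality of $H$ it has no proper nontrivial convex subgroup, i.e.\ it is Archimedean. H\"older's theorem then provides an order-preserving embedding $G/H \hookrightarrow (\R,+)$, and composition with the quotient map gives a nonzero homomorphism $\phi\colon G \to \R$ sending the positive cone into $\R_{\geq 0}$. Since $\R$ is abelian, $\phi$ factors through the abelianization and extends to a nonzero linear functional $\phi_\Q\colon H_1(G;\Q) \to \R$; in particular $H_1(G;\Q)\neq 0$, so the conclusion of the theorem is not vacuous.

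I now exploit uniqueness of $H$ a second time: because $\varphi$ preserves $<$, it permutes convex subgroups, and the only maximal proper convex subgroup is $H$, so $\varphi(H)=H$. The induced automorphism $\bar\varphi$ of $G/H$ is order-preserving, and, viewed inside $(\R,+)$, it becomes an order-preserving automorphism of a nontrivial subgroup of $\R$. Any such automorphism is multiplication by some $\lambda>0$: if the subgroup is infinite cyclic the only order-preserving automorphism is the identity, and if it is dense in $\R$ then $\bar\varphi$ extends continuously to an order-automorphism of $\R$, necessarily of the form $x\mapsto\lambda x$ with $\lambda>0$. Therefore $\phi\circ\varphi=\lambda\,\phi$, which rephrased on the abelianization reads $\phi_\Q\circ\varphi_*=\lambda\,\phi_\Q$ on $H_1(G;\Q)$. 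Viewing $\phi_\Q$ as a nonzero element of the dual space, this displays $\lambda$ as an eigenvalue of the transpose of $\varphi_*$, and since a linear map and its transpose share spectra, $\lambda$ is a positive real eigenvalue of $\varphi_*$ itself.

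The main obstacle I anticipate is marshalling the classical structure theory of bi-ordered groups so that the order-preserving character of $\varphi$ is transported faithfully through the construction and lands on the eigenvalue datum: specifically the normality of convex subgroups in any bi-ordering, the existence of a unique maximal proper convex subgroup in the finitely generated setting, and H\"older's identification of Archimedean ordered groups with subgroups of $(\R,+)$. A smaller but essential step is the verification that every order-preserving automorphism of a nontrivial subgroup of $\R$ is multiplication by a strictly positive real number; this is elementary, but should be laid out explicitly because it is precisely what upgrades ``real eigenvalue'' to ``positive real eigenvalue.''
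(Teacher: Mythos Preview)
Your argument is correct, and it shares its opening moves with the paper: both proofs isolate the unique maximal proper convex subgroup $H$ (the paper calls it $C$), observe that it is normal and $\varphi$-invariant, and use H\"older's theorem on the Archimedean quotient $G/H$. The divergence comes afterwards. The paper only uses H\"older to conclude that $G/C$ is abelian; it then tensors to get a finite-dimensional $\Q$-vector space $W=(G/C)\otimes\Q$, shows the characteristic polynomial of $\varphi_*$ on $H_1(G;\Q)$ has $\chi_{\phi_W}$ as a factor, and invokes a separate topological proposition (an application of the Brouwer fixed-point theorem to the unit sphere in $\R^n$) to produce a positive real eigenvalue of $\phi_W$. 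You instead exploit the full strength of H\"older's embedding $G/H\hookrightarrow\R$: every order-preserving automorphism of a nontrivial subgroup of $\R$ is multiplication by some $\lambda>0$, so the composite functional $\phi_\Q\colon H_1(G;\Q)\to\R$ is a nonzero eigenvector of the transpose of $\varphi_*$ with eigenvalue $\lambda$. This is a genuinely more elementary route---it avoids Brouwer entirely and pins down the eigenvalue concretely as the scaling factor on the Archimedean quotient---while the paper's approach has the virtue of isolating a reusable statement (order-preserving automorphisms of $\Q^n$ always have a positive eigenvalue) that may be of independent interest. One minor remark: your cyclic/dense dichotomy is unnecessary, since the relation $m a_0<n a\iff m\,\bar\varphi(a_0)<n\,\bar\varphi(a)$ for all integers $m,n$ already forces $\bar\varphi(a)/\bar\varphi(a_0)=a/a_0$ uniformly.
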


The paper is organized as follows.  Section 2 gives the definitions of orderable groups and their properties that we will need in the sequel.  In section 3 we discuss eigenvalues and prove the main theorem.  Applications to knot theory and fibred spaces are discussed in section 4, and in section 5 we give an application to Dehn surgery.  Section 6 proves Theorem \ref{L-space}, stated in equivalent form as Theorem \ref{L-surgery}. In the final section we list prime knots of 12 or fewer crossings whose groups are known to be bi-orderable (there are just twelve) as well as the 487 fibred knots whose groups are known {\em not} to be bi-orderable according to our criteria. 

{\bf Acknowledgements:}  We would like to thank Steve Boyer, Cameron Gordon, Peter Linnell, Dave Witte Morris and Andr\'es Navas for useful comments during the preparation of this paper.  Chuck Livingston's wonderful website {\em KnotInfo} (jointly maintained by Jae Choon Cha) was invaluable for our calculations, and we also thank Chuck for generously making the site's underlying database available to us.  We especially thank Liam Watson, whose enthusiastic interest and advice was crucial for our application to Heegaard-Floer homology.  The generous support of the Canadian Natural Sciences and Engineering Research Council is also gratefully acknowledged.

\section{Orderable groups} 

A group $G$ is {\em left-orderable} if there is a strict total ordering $<$ of its elements which is invariant under multiplication on the left: $g < h$ implies $fg < fh$ for $f,g,h \in G$.   It is easy to see that a group is left-orderable if and only if it is right-orderable.  An ordering of $G$ which is invariant under multiplication on both sides will be called a bi-ordering; if such an ordering exists we say that $G$ is {\em bi-orderable}.  Traditionally in the literature, such groups are called, simply, ``orderable,'' but we will use ``bi-ordering'' and ``bi-orderable'' to emphasize the two-sided invariance.
A mapping $\phi: G \to G'$ of ordered groups $(G, <)$ and $(G', <')$ is {\em order-preserving} if $g < h$ implies 
$\phi(f) <' \phi(g)$.  A subset $S$ of a left- or bi-ordered group $(G,<)$ is {\em convex} if $s_1 < g < s_2$ and $s_1, s_2 \in S, g \in G$ imply that $g \in S$.  If $C$ is a convex normal subgroup of $G$, an ordering of $G$ induces an ordering of $G/C$ by comparing representatives of cosets. This is well-defined and left- or bi-invariant if the ordering of $G$ is left- or bi-invariant.  Moreover, if $\phi: G \to G$ is an automorphism which preserves an order $<$ of $G$ and $\phi(C) = C$ for the convex normal subgroup $C$, then the induced map $\phi_C : G/C \to G/C$ preserves the induced order of $G/C$.

Given a left-ordering $<$ on $G$, the {\em positive cone} is defined to be the set $P = \{ g \in G | g>1 \}.$  It satisfies
(1) $P$ is closed under multiplication and (2) for every element $g \ne 1$ of $G$, exactly one of $g$ or $g^{-1}$ belongs to $P$.  Conversely, given a subset $P$ satisfying (1) and (2), a left-ordering of $G$ can be defined by declaring
$g < h$ if and only if $g^{-1}h \in P$.  A bi-ordering is characterized by having a positive cone satisfying (1), (2) and
(3) $g^{-1}Pg \subset P$ for every $g \in G$.

An ordering $<$ of a group $G$ is {\em Archimedian} if the powers of each of its nonidentity elements are cofinal in the ordering: if $g \ne 1$ and $h \in G$, there exists $n \in \Z$ so that $g^{-n} < h < g^n$.  Following is one of the early and basic theorems in orderable group theory \cite{holder} (see e.g. \cite{rhem} for a proof in English).

\begin{theorem}[H\"older, 1901]  If $G$ is a group with an Archimedian bi-ordering $<$, then there is an injective homomorphism of $G$ into the additive real numbers $\R$ which also preserves orderings (with the natural order on 
$\R$).  In particular, $G$ is abelian.  

\end{theorem}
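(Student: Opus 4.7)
The plan is to follow H\"older's original approach and construct an injective, order-preserving homomorphism $\phi \colon G \to (\R, +)$; the conclusion that $G$ is abelian then follows immediately from the existence of such an embedding into an abelian group.

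If $G$ is trivial there is nothing to prove, so fix some $g_0 \in G$ with $g_0 > 1$. For each $g \in G$ and each $n \in \Z_{>0}$, the Archimedean hypothesis provides a unique integer $m_n(g)$ with
\[ g_0^{m_n(g)} \leq g^n < g_0^{m_n(g)+1}. \]
Define $\phi(g) := \lim_{n \to \infty} m_n(g)/n$.

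First I would verify the limit exists. Raising the inequality at level $n$ to its $k$-th power (here one uses bi-invariance repeatedly to multiply inequalities on the appropriate sides, since products of positive elements need not commute) yields $k \, m_n(g) \leq m_{nk}(g) < k \, m_n(g) + k$. Comparing $m_{n_1}(g)/n_1$ and $m_{n_2}(g)/n_2$ through their common level $n_1 n_2$ then gives $|m_{n_1}(g)/n_1 - m_{n_2}(g)/n_2| \leq 1/n_1 + 1/n_2$, so the sequence is Cauchy.

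The main obstacle is additivity, $\phi(gh) = \phi(g) + \phi(h)$. Bi-invariance applied to the defining inequalities for $g^n$ and $h^n$ immediately gives
\[ g_0^{m_n(g)+m_n(h)} \leq g^n h^n < g_0^{m_n(g)+m_n(h)+2}, \]
but $g^n h^n$ is not $(gh)^n$ in general, and closing this gap is the real work. My plan is to bridge it by first establishing that $G$ is abelian: given positive elements $a \leq b$, the Archimedean property produces an integer $n$ with $a^n \leq b < a^{n+1}$, and a ``Euclidean algorithm'' on the remainder $b a^{-n} \in [1,a)$, together with the fact (which follows from bi-invariance) that conjugation preserves positivity, forces this descent to terminate at the identity and hence forces $a$ and $b$ to commute. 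Once $G$ is abelian we have $(gh)^n = g^n h^n$, and the displayed inequality yields $|m_n(gh) - m_n(g) - m_n(h)| \leq 2$, from which $\phi(gh) = \phi(g) + \phi(h)$ after dividing by $n$ and passing to the limit.

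Order-preservation of $\phi$ is clear from the construction, since $g < h$ implies $g^n < h^n$ and hence $m_n(g) \leq m_n(h)$. For injectivity, if $g > 1$ then the Archimedean property furnishes some $n$ with $g^n \geq g_0$, so $m_n(g) \geq 1$ and $\phi(g) \geq 1/n > 0$. The final assertion that $G$ is abelian is automatic from the embedding $\phi \colon G \hookrightarrow (\R, +)$.
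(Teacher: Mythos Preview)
The paper does not actually prove H\"older's theorem; it is stated as a classical result with a citation to H\"older's 1901 paper and a reference for an English-language proof. So there is no ``paper's own proof'' to compare against, and I will simply assess your argument on its merits.

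Your overall architecture is the standard one: define $\phi(g)=\lim_n m_n(g)/n$, establish that $G$ is abelian, and then deduce additivity of $\phi$ from $(gh)^n=g^nh^n$. The construction of $\phi$, the Cauchy estimate for $m_n(g)/n$, and the injectivity argument are all fine.

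The genuine gap is in your proof that $G$ is abelian. You assert that the Euclidean-type descent on the remainders $ba^{-n}\in[1,a)$ ``terminate[s] at the identity,'' and that this forces $a$ and $b$ to commute. But this descent does \emph{not} terminate in general: already in the additive group $\R$ (the model Archimedean bi-ordered group), running the Euclidean algorithm on $a=1$ and $b=\sqrt{2}$ produces the non-terminating continued-fraction expansion of $\sqrt{2}$. Termination would force $a$ and $b$ to lie in a common cyclic subgroup, which is far too strong. So the argument as written does not establish commutativity, and without commutativity your bound $|m_n(gh)-m_n(g)-m_n(h)|\le 2$ (which relies on $(gh)^n=g^nh^n$) is unavailable; the quasi-morphism estimate you do have, applied to the $2n$-fold product $(gh)^n=ghgh\cdots gh$, only gives a bound of order $n$, which is useless after dividing by $n$.

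A correct route to abelianness uses the Archimedean hypothesis differently. One clean argument: fix any $c>1$ and, for each $n$, choose integers $p,q$ with $c^{p}\le a^n<c^{p+1}$ and $c^{q}\le b^n<c^{q+1}$; then both $a^nb^n$ and $b^na^n$ lie in $[c^{p+q},c^{p+q+2})$, so $a^nb^na^{-n}b^{-n}\in(c^{-2},c^{2})$ for every $n$. One then argues (this is where the real work lies, and where your sketch is silent) that this uniform bound, together with the Archimedean property, forces the commutator to be trivial. Alternatively one first disposes of the case where $G$ has a least positive element (then $G\cong\Z$) and handles the dense case separately. Either way, some genuine argument beyond ``the descent terminates'' is required.
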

  
Left-orderable groups are torsion-free, but not conversely, and have good algebraic properties.  For example, if $G$ is left-orderable, then its group ring $\Z G$ has no zero divisors -- a property conjectured to be true for all torsion-free groups.  For abelian groups, being left-orderable is of course equivalent to being bi-orderable, which is also equivalent to being torsion-free.

\begin{example}\label{rationalex}
Consider the group $G = \Q^n$, for which we will use additive notation.  Choose real numbers 
$\a_1, \dots, \a_n \in \R$ which are linearly independent when considering the real numbers $\R$ as a vector space over the rationals $\Q$.  Then there is an embedding $\phi$ of $G$ into $\R$ by the formula:
$$\phi (x_1, \dots, x_n) = x_1\a_1 + \cdots + x_n\a_n.$$
Define an ordering $<$ of $G$ by declaring $g<h$ if and only if $\phi(g) < \phi(h)$ in $\R$.  Since the natural ordering of $\R$ is Archimedian, this defines an Archimedian bi-ordering of $\Q^n$.  In fact, by H\"older's theorem, all Archimedian bi-orderings of $\Q^n$ are of this form.  The positive cone of this ordering may be regarded geometrically as all points of $\Q^n$ which are on one side of the hyperplane in $\R^n$ defined by the normal vector  $(\a_1, \dots, \a_n)$.  The only convex subgroups relative to this ordering are $\{0\}$ and $\Q^n$.  Also note that $\phi$ is continuous, relative to the usual topologies of $\Q^n$ and $\R$.

By contrast, we can define a non-Archimedian bi-ordering $\prec$ of $\Q^n$ using the usual ordering of $\Q$ and ordering vectors lexicographically: $$(x_1, \dots, x_n) \prec (y_1, \dots, y_n)$$ if for some $j$ we have $x_i = y_i$ for
$i<j$ and $x_j < y_j$.  Relative to this ordering there are $n+1$ distinct convex subgroups $C_j$, $j = 0, \dots, n$.  Here $C_0$ is the trivial subgroup, $C_n = \Q^n$ and for $0 < j < n$, $C_j$ is defined by the equation 
$x_1 = \cdots = x_{n-j} = 0$ and has dimension $j$. 

An easy argument shows that under {\em any} bi-ordering of $\Q^n$, a convex subgroup must actually be a vector subspace. 
\end{example}

Orderability of groups is obviously inherited by subgroups, but not necessarily by quotients (unless the kernel is convex in some ordering). 
The following are standard facts in ordered group theory, but we include proofs for the reader's convenience.  

\begin{lemma}\label{extension}
Suppose $H$ is a normal subgroup of $G$ and both $H$ and $G/H$ are left-orderable.  Then $G$ is left-orderable.
If both $H$ and $G/H$ are bi-orderable, then $G$ is bi-orderable if and only if some bi-ordering of $H$ is preserved under conjugation by all elements of $G$. 
\end{lemma}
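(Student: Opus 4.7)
The plan is to build orderings on $G$ by a lexicographic recipe that gives priority to $G/H$ and then falls back to $H$. Fix positive cones $P_H \subset H$ and $P_{G/H} \subset G/H$ for the hypothesized orderings, let $\pi \colon G \to G/H$ denote the projection, and define
$$P_G := \pi^{-1}(P_{G/H}) \cup P_H.$$
These two pieces are disjoint, since $1 \notin P_{G/H}$ forces $\pi^{-1}(P_{G/H}) \cap H = \emptyset$, and in particular $1 \notin P_G$.

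For the left-orderability statement, I would verify that $P_G$ satisfies the two positive-cone axioms. Closure under multiplication is a short case check on whether each factor lies in $H$: if neither does, the product's image in $G/H$ is a product of two elements of $P_{G/H}$; if exactly one factor lies in $H$, the product's image equals the image of the other factor and so lies in $P_{G/H}$; and if both lie in $H$, closure of $P_H$ handles it. Trichotomy splits similarly: for $g \in G \setminus H$ one uses trichotomy in $P_{G/H}$, and for $g \in H \setminus \{1\}$ one uses trichotomy in $P_H$.

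For the bi-orderable ``if'' direction, it remains to check that $P_G$ is conjugation-invariant. For $x \in \pi^{-1}(P_{G/H})$ this follows from conjugation-invariance of $P_{G/H}$ in $G/H$; for $x \in P_H$, normality of $H$ places $g^{-1}xg$ in $H$, and the hypothesis that the chosen bi-ordering of $H$ is preserved by conjugation by every element of $G$ places $g^{-1}xg$ in $P_H$. For the ``only if'' direction, given a bi-ordering of $G$ I would simply restrict to $H$: the restriction is automatically a bi-ordering of $H$, and condition (3) for the ambient bi-ordering of $G$ forces it to be invariant under conjugation by every $g \in G$. Notably this direction does not require the assumption that $G/H$ is bi-orderable.

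I do not anticipate a serious obstacle. The only thing to watch carefully is the disjointness of the two pieces defining $P_G$ and the exclusion of $1$, both of which follow from the initial observation. The remainder is a routine case analysis that parallels the lexicographic bi-ordering on $\Q^n$ already illustrated in Example \ref{rationalex}.
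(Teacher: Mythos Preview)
Your proposal is correct and follows exactly the same approach as the paper: define the positive cone of $G$ as $\pi^{-1}(P_{G/H}) \cup P_H$, verify (1), (2), and (in the bi-ordered case) (3), and for the ``only if'' direction restrict a bi-ordering of $G$ to $H$. The paper merely calls the verifications ``routine'' where you have spelled out the case analysis.
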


\begin{proof} Define a positive cone $P$ for $G$ to be the union of the positive cone of $H$ and the preimage of the positive cone of $G/H$ under the projection.  It is routine to check that $P$ satisfies the conditions (1) and (2) above (and (3) in the bi-ordered case). If $G$ is bi-orderable, then the ordering restricted to $H$ is clearly preserved by conjugation.  
\end{proof}

Note that in this proof, $H$ is convex in the ordering constructed for $G$.

\begin{lemma} \label{ordlemma} 
If $G$ is a finitely-generated nontrivial bi-orderable group, then for any given bi-ordering $<$ there exists a unique maximal convex subgroup $C$ of $G$ satisfying $C \ne G$.  Moreover, $C$ is normal in $G$ and $G/C$ is abelian. 
If an automorphism $\phi : G \to G$ preserves $<$ then $\phi(C) = C.$  
\end{lemma}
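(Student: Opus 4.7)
The strategy is to extract $C$ as the union of all proper convex subgroups, show this union is proper by using finite generation, deduce normality and the automorphism statement from maximality, and then use H\"older's theorem (stated above) to get abelianness of $G/C$.

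First I would observe that in any bi-ordered group the collection of convex subgroups is \emph{totally ordered by inclusion}. Given two convex subgroups $A,B$ with neither contained in the other, pick $a\in A\setminus B$ and $b\in B\setminus A$; replacing by inverses (convex subgroups are closed under inversion) we may assume $a,b>1$. Comparing $a$ and $b$, say $a<b$; then $1<a<b$ with $1,b\in B$ forces $a\in B$ by convexity, a contradiction. Hence any family of proper convex subgroups forms a chain, and the union $C$ of \emph{all} proper convex subgroups is itself a convex subgroup. The union is proper: otherwise, since $G$ is finitely generated, every generator would lie in some proper convex subgroup from the chain, so all generators would lie in a single proper convex subgroup $C_k$, forcing $C_k=G$. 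Thus $C$ is the unique maximal proper convex subgroup.

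Next I would deduce normality and $\phi$-invariance from maximality by the same trick. Because the ordering is bi-invariant, both inner conjugation $x\mapsto gxg^{-1}$ and the given $\phi$ are order-preserving automorphisms (for conjugation, axiom (3) of the positive cone shows that conjugation sends the positive cone into itself; applying the same to $g^{-1}$ gives equality, so conjugation is order-preserving). Order-preserving bijections send convex subgroups to convex subgroups, and they cannot send a proper subgroup onto $G$. Hence $gCg^{-1}$ and $\phi(C)$ are proper convex subgroups of $G$, so by maximality $gCg^{-1}\subseteq C$ and $\phi(C)\subseteq C$; applying the same argument to $g^{-1}$ and $\phi^{-1}$ yields $gCg^{-1}=C$ and $\phi(C)=C$.

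Finally, to see that $G/C$ is abelian I would show its induced bi-ordering is Archimedean and invoke H\"older. Convex subgroups of $G/C$ correspond via $\pi^{-1}$ to convex subgroups of $G$ lying between $C$ and $G$ (the preimage of a convex subgroup is convex by a direct check using that $\pi$ respects the inequalities whenever the cosets differ). By maximality of $C$ there are no such intermediate subgroups, so $G/C$ has no proper nontrivial convex subgroups. For any $1<a,b$ in $G/C$, the set $\{x:a^{-n}<x<a^n\text{ for some }n\geq0\}$ is a convex subgroup containing $a$; being nontrivial it must be all of $G/C$, so $b<a^n$ for some $n$, i.e.\ the ordering is Archimedean. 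H\"older's theorem then embeds $G/C$ in $(\R,+)$, and in particular $G/C$ is abelian.

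The main obstacle is really the Archimedean step: one has to verify carefully that the correspondence $D\mapsto \pi^{-1}(D)$ lands in convex subgroups of $G$ (which uses convexity of $C$ together with well-definedness of the induced order on $G/C$), and that the convex hull of a cyclic subgroup is itself a subgroup in any bi-ordered group. Every other step is a direct maximality argument.
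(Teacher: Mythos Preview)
Your proof is correct and follows essentially the same route as the paper's: define $C$ as the union of all proper convex subgroups (the paper phrases this equivalently as the union of convex subgroups not containing the largest generator $g_k$, which is the same set since a convex subgroup containing $g_k$ must contain every $g_i$ by convexity), use finite generation plus the chain property to see $C\neq G$, deduce normality and $\phi$-invariance from uniqueness of the maximal proper convex subgroup, and conclude abelianness of $G/C$ via the Archimedean property and H\"older. You simply supply more detail at the Archimedean step than the paper does.
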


\begin{proof}  Let $g_1, \dots , g_k$ be a generating set for $G$, with $k$ minimal.  We assume without loss of generality that for a given bi-ordering $<$ we have $1 < g_1< \cdots < g_k$.  Let $C$ be the union of all convex subgroups of $G$ which do {\em not} contain $g_k$.   Any union of convex subgroups is a convex subgroup, so $C$ is a convex subgroup and not equal to $G$, since it does not contain $g_k$.  Clearly any strictly larger convex subgroup will equal $G$.  Since, for any two convex subgroups, one must contain the other, uniqueness of $C$ is clear. If $\phi$ preserves $<$, then $\phi(C)$ is also a maximal proper subgroup, so $\phi(C) = C.$  In particular this applies to conjugation, so $C$ is normal.  The induced bi-ordering of $G/C$ is Archimedian (otherwise one would have a larger proper convex subgroup) so by 
H\"older's theorem $G/C$ is abelian.
\end{proof}

We will need to consider the tensor product of an abelian group $A$ with the rational numbers $\Q$ (considered as modules over $\Z$) and extend a bi-ordering of $A$ to a bi-ordering of $A \otimes \Q$.  Using additive notation,
given a bi-ordering $<$ of $A$, and an element $a_1 \otimes (p_1/q_1) + \cdots + a_n \otimes (p_n/q_n)$ of 
$A \otimes \Q$, we declare it to be in the positive cone of $A \otimes \Q$ if and only if
$$p_1(q/q_1) a_1 + \cdots  + p_n(q/q_n) a_n > 0$$ in $A$, where $q = |q_1\cdots q_n|$, so that the coefficients are integers.  The following is easily checked.

\begin{lemma} \label{tensorlemma}
Considering the abelian group $A$ as a subgroup of $A \otimes \Q$ via $a \to a \otimes 1$, any bi-ordering $<$ of $A$ extends to a bi-ordering of $A \otimes \Q$ by the recipe described above.  If $\phi : A \to A$ is an automorphism which preserves $<$, then $\phi \otimes id :  A \otimes \Q \to A \otimes \Q$ preserves the extended ordering.
\end{lemma}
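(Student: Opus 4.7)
The plan is to first reformulate the positive cone more cleanly: every element of $A \otimes \Q$ can be brought to a common denominator and written in the normal form $a \otimes (1/q)$ for some $a \in A$ and positive integer $q$. The author's recipe then amounts to declaring $a \otimes (1/q)$ positive if and only if $a > 0$ in $A$; I shall adopt this equivalent formulation, since it makes every subsequent check transparent.

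First I would handle well-definedness. Because $A$ is bi-orderable it is torsion-free, so $A$ embeds in $A \otimes \Q$ and $a \otimes (1/q) = 0 \iff a = 0$; similarly, $a \otimes (1/q) = a' \otimes (1/q')$ forces $q' a = q a'$ in $A$. I then invoke the elementary fact that, in a bi-ordered abelian group, multiplication by a positive integer preserves sign (because $na = a + \cdots + a$ is positive whenever $a$ is, by closure of the positive cone under addition), and conclude $a > 0 \iff q'a > 0 \iff qa' > 0 \iff a' > 0$. Next I would verify the axioms for a positive cone: closure under addition reduces, after clearing denominators, to closure in $A$; trichotomy follows from torsion-freeness of $A$ together with trichotomy in $A$; and bi-invariance is automatic since $A \otimes \Q$ is abelian. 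That the new ordering extends the original is immediate from $a \otimes 1 > 0 \iff a > 0$.

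For the statement about $\phi$, I would simply compute $(\phi \otimes \mathrm{id})(a \otimes (1/q)) = \phi(a) \otimes (1/q)$, which is positive in the extended ordering iff $\phi(a) > 0$ in $A$ iff $a > 0$ in $A$ (since $\phi$ preserves $<$) iff $a \otimes (1/q)$ was positive. Hence $\phi \otimes \mathrm{id}$ is order-preserving.

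The only step with any genuine content is the well-definedness of the positive cone, and its crux is the torsion-freeness of $A$; without that, the recipe would not descend from representatives to elements of $A \otimes \Q$. Everything else is routine bookkeeping.
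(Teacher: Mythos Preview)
Your proof is correct. The paper does not actually prove this lemma---it merely states that it ``is easily checked''---so there is no argument to compare against; your reduction to the normal form $a \otimes (1/q)$ with $q>0$ is exactly the clean way to carry out that check, and your handling of well-definedness via torsion-freeness of $A$ is the key point.
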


\section{Eigenvalues} 

Let $G$ be a finitely-generated group and $\phi \colon G \to G$ an automorphism, or more generally, an endomorphism.   Consider the commutator subgroup $G'$ of $G$, and the induced automorphism $\phi_*$ of the abelianized group $G/G'$.   Tensoring with the rationals we obtain a
linear map of finite-dimensional vector spaces over $\Q$:
$$\phi_* \otimes id \colon G/G' \otimes \Q \to G/G' \otimes \Q.$$
The {\em eigenvalues} of $\phi$ are defined to be the the eigenvalues of this linear map, that is the roots of its characteristic polynomial $\chi_{\phi_* \otimes id}(\lambda) = \det (M - \lambda I)$, where $M$ is a matrix representing 
$\phi_* \otimes id$.  Note that, by the universal coefficient theorem, we have $G/G' \otimes \Q \cong H_1(G, \Q)$ and the
automorphism $\phi_* \otimes id$ is just the map induced by $\phi$ on rational homology.

Eigenvalues of automorphisms were considered in \cite{LRR}, where the following was proved.  Our main theorem will be a similar result for nonabelian groups.

\begin{proposition} \label{Pmain}
Let $A$ be a torsion-free abelian group of finite rank and let
$\theta$ be an automorphism of $A$.
Then $\theta$ preserves a bi-ordering if and only if for
each eigenvalue of $\theta$, at least one of its Galois conjugates
is a positive real number.
\end{proposition}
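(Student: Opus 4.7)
The plan is to tensor with $\Q$ and work with $V = A \otimes \Q$ and the $\Q$-linear automorphism $T = \theta \otimes \mathrm{id}$, which has the same characteristic polynomial (hence the same eigenvalues) as $\theta$. By Lemma \ref{tensorlemma}, a $\theta$-invariant bi-ordering of $A$ extends to a $T$-invariant bi-ordering of $V$, and any $T$-invariant bi-ordering of $V$ restricts to $A$. So it suffices to prove the equivalence at the level of $V$.

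For the forward direction, assume $T$ preserves a bi-ordering of $V$. Since any two convex subgroups of a bi-ordered group are comparable and convex subgroups of $V$ are $\Q$-subspaces (as noted in Example \ref{rationalex}), there is a maximal chain of convex subspaces
\[ 0 = V_0 \subsetneq V_1 \subsetneq \cdots \subsetneq V_k = V. \]
Because $T$ sends convex subspaces to convex subspaces of the same dimension and distinct convex subspaces have distinct dimensions, $T$ fixes each $V_i$ setwise. Each quotient $W_i = V_i/V_{i-1}$ inherits an Archimedean bi-ordering (otherwise the chain could be refined), so H\"older's theorem produces an order-preserving $\Q$-linear embedding $\psi_i \colon W_i \hookrightarrow \R$. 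A Dedekind-cut comparison, using that $T|_{W_i}$ is $\Q$-linear and order-preserving, forces the ratio $\psi_i(Tw)/\psi_i(w)$ to be a positive real constant $\mu_i$ independent of $w$; that is, $T$ acts on $\psi_i(W_i)$ as multiplication by $\mu_i$. Then $\psi_i(W_i)$ is a $\Q(\mu_i)$-subspace of $\R$, so the characteristic polynomial of $T|_{W_i}$ over $\Q$ is a power of the minimal polynomial of $\mu_i$; every eigenvalue of $T|_{W_i}$ is therefore a Galois conjugate of the positive real $\mu_i$. Since the characteristic polynomial of $T$ factors as $\prod_i \chi_{T|_{W_i}}$, every eigenvalue of $T$ has some positive real Galois conjugate.

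For the reverse direction, take the primary decomposition $V = \bigoplus_p V_p$ of $V$ as a $\Q[T]$-module, indexed by the irreducible rational factors $p$ of the minimal polynomial. By hypothesis each such $p$ has a positive real root $\mu_p$, yielding an embedding $\Q(\mu_p) \hookrightarrow \R$. Filter $V_p$ by the images of successive powers of $p(T)$; on each successive quotient, $T$ acts as multiplication by $\mu_p$ on a $\Q(\mu_p)$-vector space, and one induces a $T$-preserved Archimedean bi-ordering via any $\Q$-linear embedding of the quotient into $\R$ extending the inclusion $\Q(\mu_p) \hookrightarrow \R$, exactly as in Example \ref{rationalex}. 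Assembling these orderings lexicographically, first along the filtration of each $V_p$ and then across the $p$'s, produces a $T$-invariant bi-ordering of $V$, which restricts to one on $A$. I expect the main obstacle to be the ``scalar multiplication'' step of the forward direction: upgrading the mere existence of a positive real eigenvalue on each Archimedean quotient to the statement that $T$ is literally multiplication by a positive real on that quotient is what precludes Galois orbits lacking any positive real member.
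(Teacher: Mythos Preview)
Your argument is essentially correct and complete in both directions; the worry you raise about the ``scalar multiplication'' step is unfounded. Once $W_i$ is identified with a subgroup of $\R$ via H\"older, the fact that an order-preserving additive endomorphism of a dense subgroup of $\R$ is multiplication by a positive real is a standard Dedekind-cut argument (sometimes called Hion's lemma): for $x_0>0$ set $\mu=T(x_0)/x_0$, and for any $y>0$ the rationals below $y/x_0$ and those below $T(y)/T(x_0)$ coincide because $px_0<qy \iff pT(x_0)<qT(y)$. From there your deduction that $\chi_{T|_{W_i}}$ is a power of the minimal polynomial of $\mu_i$ is exactly right. The reverse direction via the primary decomposition and the $p(T)$-adic filtration also goes through; the one point to make explicit is that the $\Q$-linear embedding of each successive quotient into $\R$ should be $\Q(\mu_p)$-linear (equivalently, choose the auxiliary real coefficients linearly independent over $\iota(\Q(\mu_p))$) so that $T$ really becomes multiplication by $\mu_p$.

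As for comparison with the paper: the paper does \emph{not} prove Proposition~\ref{Pmain}. It is quoted from \cite{LRR}, and the paper instead supplies a self-contained proof of the weaker special case Proposition~\ref{matrix} (one implication, for $\Q^n$, yielding only a single positive real eigenvalue rather than the full Galois-orbit statement). That proof is topological: it builds a codimension-one real hyperplane $H$ from the ordering, observes $M$ preserves the closed half-space, and applies Brouwer's fixed point theorem on a hemisphere of $S^{n-1}$ to produce an eigenvector with positive eigenvalue. Your approach is purely order-theoretic/algebraic, closer in spirit to the original in \cite{LRR}, and strictly stronger: it recovers the full ``every Galois orbit meets $\R_{>0}$'' conclusion and handles the converse as well. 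The trade-off is that the paper's Brouwer argument needs nothing beyond elementary topology, whereas yours requires managing the convex filtration and the $\Q(\mu)$-module structure.
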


That paper also contains examples of two automorphisms of a free group, both of which have eigenvalues exactly the $n$-th roots of unity, for a given $n \ge 2$. However, one of the automorphisms preserves a bi-order, while the other, which is periodic of period $n$, cannot preserve a bi-order.  This shows in particular, that one cannot determine whether an automorphism of a finitely generated free group preserves a bi-order by looking at its action on 
the abelianization of the free group.  On the other hand, we will see that if an automorphism has {\em no} positive real eigenvalues it cannot preserve a bi-ordering.
For the reader's convenience, we offer a special case of Proposition \ref{Pmain} sufficient for our purposes, and a different, topological, proof.

\begin{proposition} \label{matrix}
Consider the rational vector space $\Q^n$, where $n$ is a positive integer, and let $M : \Q^n \to \Q^n$ be an automorphism represented by the nonsingular matrix $M$.  If $M$ preserves a bi-order of $\Q^n$, considered as an
additive abelian group, then $M$ has a positive real eigenvalue.
\end{proposition}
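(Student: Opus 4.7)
My plan is to reduce to the case of an Archimedian bi-order and then invoke H\"older's theorem. Under the given bi-order, the convex subgroups of $\Q^n$ form a chain under inclusion, and by the remark at the end of Example \ref{rationalex} each is a $\Q$-subspace; since $\Q^n$ is finite-dimensional, there is a unique maximal proper convex subgroup $C$. Because $M$ preserves the order, $M(C)$ is again a proper convex subgroup, so maximality of $C$ forces $M(C) = C$. Thus $M$ induces a $\Q$-linear, order-preserving automorphism $\bar M$ on $\Q^n/C$, and the quotient bi-order is Archimedian by maximality of $C$.

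H\"older's theorem then provides an order-preserving (hence $\Q$-linear) additive embedding $\phi : \Q^n/C \hookrightarrow \R$ with image a nonzero $\Q$-subspace $V \subseteq \R$, and $\bar M$ transports to an order-preserving $\Q$-linear automorphism $\psi : V \to V$. The heart of the argument is to verify that $\psi$ is multiplication by a positive real scalar $\lambda$. Fix $v_0 \in V$ with $v_0 > 0$ and set $\lambda = \psi(v_0)/v_0 > 0$. For any $v \in V$ and any $p/q \in \Q$ with $q>0$, $\Q$-linearity and order-preservation yield
$$v < (p/q)v_0 \iff qv < pv_0 \iff q\psi(v) < p\psi(v_0) \iff \psi(v) < (p/q)\psi(v_0),$$
so the Dedekind cuts of $v/v_0$ and $\psi(v)/\psi(v_0)$ in $\Q$ coincide. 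Hence $\psi(v) = \lambda v$ for all $v \in V$.

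To conclude, choose a $\Q$-basis $e_1, \dots, e_k$ of $\Q^n/C$; the vectors $v_i = \phi(e_i)$ form a $\Q$-basis of $V$, and the matrix $A \in GL_k(\Q)$ of $\bar M$ in this basis satisfies $\sum_j a_{ji} v_j = \lambda v_i$, exhibiting $(v_1, \ldots, v_k)$ as a left eigenvector (over $\R$) of $A$ with eigenvalue $\lambda$. Therefore $\lambda$ is a positive real eigenvalue of $\bar M$, and hence of $M$, since the characteristic polynomial of $\bar M$ divides that of $M$.

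The chief obstacle is the Dedekind-cut step showing that $\psi$ is scalar multiplication; this is essentially the computational core of H\"older's theorem. The surrounding bookkeeping---isolating $C$, passing to the Archimedian quotient, and extracting the eigenvalue from a rational matrix relation---is routine.
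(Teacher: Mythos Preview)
Your proof is correct, but it takes a genuinely different route from the paper's.

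The paper's proof is topological. It defines $H\subset\R^n$ to be the set of points every neighbourhood of which meets both $\Q^n_+$ and $\Q^n_-$, shows (Lemma~\ref{subspace}) that $H$ is a hyperplane with complement $U_+\cup U_-$, and observes that $M$ preserves the closed half-space $\overline{U_+}$. It then applies Brouwer's fixed-point theorem to the normalized map $\hat M(x)=M(x)/|M(x)|$ on the hemisphere $D_+=\overline{U_+}\cap S^{n-1}$; a fixed point of $\hat M$ in $D_+$ is exactly an eigenvector with positive eigenvalue.

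Your argument is purely order-theoretic and avoids Brouwer altogether. You isolate the maximal proper convex subspace $C$, pass to the Archimedian quotient $\Q^n/C$, and use H\"older's embedding $\phi:\Q^n/C\hookrightarrow\R$ together with a Dedekind-cut computation to show that the transported map $\psi$ is literally multiplication by some $\lambda>0$; the relation $\sum_j a_{ji}\phi(e_j)=\lambda\,\phi(e_i)$ then exhibits $\lambda$ as a (left) eigenvalue of the matrix of $\bar M$, hence of $M$. In effect you are carrying out, already in the abelian case, the same convex-subgroup reduction the paper later uses to prove Theorem~\ref{poseval}, but pushing it one step further so that no separate fixed-point argument is needed. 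The paper's approach buys a clean geometric picture of the positive cone as (essentially) a half-space; yours buys a shorter, more algebraic proof that sidesteps Lemma~\ref{subspace} entirely.
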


\begin{proof}  The case $n=1$ is rather trivial, so we assume $n \ge 2$.
With the natural inclusion $\Q^n \subset \R^n$, $M$ also represents an automorphism of $\R^n$.  Let $H \subset \R^n$ be the set of all $x \in \R^n$ such that every neighbourhood of $x$ contains points which are positive and points which are negative in the given bi-order of $\Q^n$.  By Lemma \ref{subspace} below, $H$ is a subspace of $\R^n$ of dimension $n-1$, and its complement is the union of two disjoint open sets $U_+$ and $U_-$, which intersect 
$\Q^n$ in points which are, respectively, positive and negative in the given ordering.  Also, $H$ is invariant under the action of $M$.

$H$ intersects the unit sphere $S^{n-1}$ of $\R^n$ in an $(n-2)$-sphere, which separates $S^{n-1}$ into two open disks of dimension $n-1$, one of which lies in $U_-$ and the other in $U_+$.  Let $D_+$ denote the closure of the latter disk.
Now $M$ induces a continuous function $\hat{M} : S^{n-1} \to S^{n-1}$ by the formula $$\hat{M} (x) = M(x)/|M(x)|.$$
As $M$ preserves the given ordering of $\Q^n$ it also sends the closure of $U_+$ to itself, and therefore $\hat{M}$ maps
$D_+$ to itself.  By a well-known theorem of Brouwer, $\hat{M}$ has a fixed point in $D_+$, which corresponds to an eigenvector of $M$ with positive eigenvalue. 
\end{proof}

\begin{lemma}\label{subspace}
As defined above, $H$ is a subspace of dimension $n-1$.
\end{lemma}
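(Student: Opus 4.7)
The strategy is to identify $H$ as the kernel of a single continuous $\R$-linear functional on $\R^n$; producing that functional from the bi-ordering is the whole content of the lemma. By the remark at the end of Example \ref{rationalex}, every convex subgroup of $\Q^n$ is a $\Q$-subspace, and since any two convex subgroups of a bi-ordered group are nested (as noted in the proof of Lemma \ref{ordlemma}), the finite $\Q$-dimension of $\Q^n$ forces a maximal proper convex subspace $C \subsetneq \Q^n$ to exist. The induced bi-ordering on $\Q^n/C$ then admits no proper nontrivial convex subgroup, so it is Archimedian, and H\"older's theorem supplies an order-preserving embedding $\phi \colon \Q^n/C \hookrightarrow \R$. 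Composing with the projection $\pi$ yields a nonzero $\Q$-linear map $\psi = \phi \circ \pi \colon \Q^n \to \R$ with $\psi(x) > 0 \Rightarrow x \in P$ and $\psi(x) < 0 \Rightarrow x \in -P$. Since $\psi$ is determined by its values on the standard basis, it extends uniquely to a continuous $\R$-linear functional $\tilde\psi \colon \R^n \to \R$, necessarily nonzero.

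Next I would show $H = \ker\tilde\psi$, which gives $\dim H = n-1$ at once. If $\tilde\psi(x) \ne 0$, continuity produces a neighborhood of $x$ on which $\tilde\psi$ has constant sign, and since $\tilde\psi$ agrees with $\psi$ on $\Q^n$, every nearby rational point lies entirely in $P$ or entirely in $-P$, placing $x$ in $U_+ \cup U_-$. Conversely, suppose $\tilde\psi(x) = 0$. Fix any $v \in \Q^n$ with $\psi(v) > 0$ (which exists because $\psi \ne 0$) and rational approximants $x_k \to x$; then $\psi(x_k) \to 0$, so one can pick positive rationals $\epsilon_k \to 0$ with $\epsilon_k\psi(v) > |\psi(x_k)|$. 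The rational points $x_k \pm \epsilon_k v$ converge to $x$, while $\psi(x_k + \epsilon_k v) > 0 > \psi(x_k - \epsilon_k v)$ places them in $P$ and $-P$ respectively, showing $x \in H$.

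The main obstacle is the initial construction of $\tilde\psi$: absent the Archimedian quotient there is no obvious linear functional on $\R^n$ to target. One can directly verify that $H$ is a real subspace of $\R^n$ (addition is preserved because $P + P \subset P$, and positive-real scaling is preserved because $P$ is closed under positive rational scaling and $\overline P$ consequently under positive real scaling), but ruling out that both $P$ and $-P$ are dense in $\R^n$ — i.e., excluding $H = \R^n$ and pinning down the exact codimension — seems hard without extracting an explicit functional. The Archimedian-quotient construction delivers both existence and dimension simultaneously.
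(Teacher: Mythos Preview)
Your argument is correct, and both proofs ultimately rely on the same H\"older functional $\psi = \phi\circ\pi \colon \Q^n \to \R$ coming from the maximal proper convex subspace; the difference is in how it is deployed. The paper first verifies the three subspace axioms for $H$ by bare hands (using only $P+P\subset P$ and closure of $P$ under positive rational scaling), and then invokes $\psi$ merely to exhibit an open set $(\phi\pi)^{-1}(0,\infty)$ inside $U_+$, concluding $\dim H = n-1$ from the topological fact that a linear subspace whose complement is disconnected must be a hyperplane. You instead promote $\psi$ to the $\R$-linear $\tilde\psi$ at the outset and prove the sharper statement $H = \ker\tilde\psi$, from which both ``subspace'' and ``dimension $n-1$'' are immediate. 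Your route is a bit slicker and yields an explicit description of $H$; the paper's route keeps the subspace verification elementary and independent of H\"older, at the cost of the extra separation step. Your final paragraph correctly anticipates the paper's strategy and its one nontrivial point (ruling out $H=\R^n$), which the paper handles exactly as you guessed, via the functional.
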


\begin{proof}
Let $\Q^n_+$ and $\Q^n_-$ denote the points of $\Q^n$ which are greater (resp. less) than 0 in the given
bi-ordering of $\Q^n$.  Then 
$$H = \{ x \in \R^n | \forall \e > 0, \exists x_+ \in \Q^n_+ ,  x_-\in \Q^n_- \; with \; |x - x_+| < \e \; and \; |x - x_-| < \e\}$$

We first check that $H$ is a linear subspace:\newline 
(i) $x, y \in H \Rightarrow x+y \in H$:  Let $\e > 0$ be given and choose $x_+$ and $y_+$ in $\Q^n_+$ so that $|x - x_+| < \e/2$ and 
$|y - y_+| < \e/2$.  Then $x_+ + y_+ \in \Q^n_+$ and 
$$|(x+y) - (x_+ + y_+)| \le |x - x_+| + |y - y_+| < \e.$$
Similarly, the $\e$ neighbourhood of $x+y$ contains points of $Q^n_-$. \newline
(ii) $x \in H \Rightarrow -x \in H$:  This is left to the reader. \newline
(iii) $x \in H, \a \in \R \Rightarrow \a x \in H:$  By the above, we may assume $\a > 0$.  Given $\e >0$, 
choose $x_+ \in \Q^n_+$ so that $|x - x_+| < \e/{2\alpha}$.  Then choose $r \in \Q$ so that $|\a - r| < \e/{2|x_+|}$ and $r>0$.
Then $r x_+$ belongs to $\Q^n_+$ and
$$|\a x - r x_+| \le |\a x - \a x_+| + |\a x_+ - r x_+| < \e/2 + \e/2 = \e.$$
Similarly for points of $\Q^n_-$.

We've established that $H$ is a linear subspace of $\R^n$; it remains to establish its dimension.  The complement of $H$ is the union of two disjoint open sets: 
the set $U_+ $ of all $x \in \R^n$ such that some neighbourhood of $x$ intersects $\Q^n$ only in points of $\Q^n_+$, and 
$U_-$ defined similarly, for points of $\Q^n_-$.  We must verify that, in fact, $U_+$ and $U_-$ are nonempty.  To that end, referring to Lemma \ref{ordlemma} and Example \ref{rationalex}, consider the maximal proper convex subgroup 
$C \subset \Q^n$, which is a subspace, the projection $\pi: \Q^n \to \Q^n/C$ and the embedding $\phi: \Q^n/C \to \R$ given by H\"older's theorem.  Both maps are continuous, and $(\phi\pi)^{-1}(0, \infty)$ is a nonempty open subset of 
$U_+$.  Similarly, $U_-$ is nonempty.
Since $H$ separates $\R^n$ into two components, its dimension must be $n-1$.
\end{proof}

We can now prove the main result of this paper.

\begin{theorem}\label{poseval}
Suppose $G$ is a nontrivial finitely generated bi-orderable group and 
that $\phi : G \to G$ preserves a bi-ordering of $G$.  Then $\phi$ has a positive eigenvalue.
\end{theorem}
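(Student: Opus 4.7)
The plan is to reduce to the abelian Archimedian case already handled by Proposition \ref{matrix}, using Lemma \ref{ordlemma} to find a canonical quotient of $G$ on which $\phi$ still acts order-preservingly.

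First, I would apply Lemma \ref{ordlemma} to the bi-ordering preserved by $\phi$. This yields a maximal proper convex subgroup $C \lhd G$ with $G/C$ abelian, and moreover $\phi(C)=C$, so $\phi$ descends to an automorphism $\phi_C$ of $G/C$ which preserves the induced (Archimedian) bi-ordering on $G/C$. Since $G$ is finitely generated, so is $G/C$; since $G/C$ is bi-orderable it is torsion-free; and $G/C \ne \{1\}$ because $C \ne G$. Hence $G/C \cong \Z^n$ for some $n \ge 1$.

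Next, I would tensor with $\Q$. By Lemma \ref{tensorlemma}, the bi-ordering of $G/C$ extends to a bi-ordering of $(G/C)\otimes\Q \cong \Q^n$, and the induced map $\phi_C \otimes \mathrm{id}$ preserves it. Proposition \ref{matrix} then produces a positive real eigenvalue $\lambda$ of $\phi_C \otimes \mathrm{id}$.

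Finally, I would transport $\lambda$ up to an eigenvalue of $\phi$ itself. Since $G/C$ is abelian, $G' \subseteq C$, so the quotient $G/G' \twoheadrightarrow G/C$ is well-defined. Tensoring with $\Q$ (which is flat over $\Z$) gives a short exact sequence
\[
0 \longrightarrow (C/G')\otimes\Q \longrightarrow H_1(G;\Q) \longrightarrow (G/C)\otimes\Q \longrightarrow 0
\]
that is equivariant with respect to $\phi_* \otimes \mathrm{id}$ on the middle term and $\phi_C \otimes \mathrm{id}$ on the right. The characteristic polynomial of $\phi_* \otimes \mathrm{id}$ therefore factors as the product of those of the two outer maps, so every eigenvalue of $\phi_C\otimes \mathrm{id}$ is an eigenvalue of $\phi_*\otimes \mathrm{id}$. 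In particular the positive real $\lambda$ is an eigenvalue of $\phi$ in the sense of Section 3, completing the proof. The main obstacle is the existence of the order-controlling quotient $G/C$ and the verification that the induced bi-ordering is preserved by the descended automorphism; once this is in place, the abelian result of Proposition \ref{matrix} and the flat-quotient argument do the rest with no further difficulty.
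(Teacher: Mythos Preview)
Your proof is correct and follows essentially the same route as the paper's: pass to the maximal proper convex subgroup $C$ via Lemma~\ref{ordlemma}, use Lemma~\ref{tensorlemma} and Proposition~\ref{matrix} to get a positive real eigenvalue of $\phi_C\otimes\mathrm{id}$ on $(G/C)\otimes\Q$, then use the exact sequence $0\to C/G'\to G/G'\to G/C\to 0$ tensored with $\Q$ to conclude that this eigenvalue divides $\chi_{\phi_*\otimes\mathrm{id}}$. The only cosmetic difference is that you invoke flatness of $\Q$ to keep the tensored sequence short exact, whereas the paper only asserts right-exactness and instead works with the kernel $K$ of $V\to W$; either way one obtains the factorisation $\chi_{\phi_V}=\chi_{\phi_K}\cdot\chi_{\phi_W}$.
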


\begin{proof}
If we knew that the commutator subgroup $G'$ were convex in the ordering, then we would be done, as we would have an induced bi-ordering of $G/G'$ preserved by the induced map $\phi_* : G/G' \to G/G'$, and Proposition \ref{matrix} would
apply to $\phi_* \otimes id :   G/G' \otimes \Q \to  G/G' \otimes \Q$.  But unfortunately $G'$ need not be convex, so we use a somewhat different approach.
Applying Lemma \ref{ordlemma}, let $C$ be the maximal proper convex subgroup of $G$, with respect to the 
bi-ordering $<$ preserved by $\phi$.
Since the quotient $G/C$ is abelian,  $G'$ is contained in $C$ and we have a short exact sequence 
\[
0 \rightarrow C/G' \rightarrow G/G' \rightarrow G/C \rightarrow 0 
\]
of abelian groups.  Moreover since $\phi(C) = C$, we have induced maps 
\[
\begin{CD}0 @>>> C/G' @>>> G/G' @>>> G/C @>>> 0\mbox{ } \\
 & & @VVV @V{\phi_*}VV @V{\phi_C}VV \\
0 @>>> C/G' @>>> G/G' @>>> G/C @>>>0.
\end{CD}
\]
Note that the map $\phi_C$ preserves the bi-ordering of $G/C$ induced from the bi-ordering of $G$.

Writing $U = C/G' \otimes \Q$, $V= G/G' \otimes \Q$, and $W= G/C \otimes \Q$, tensoring with $\Q$ (which is a right-exact functor) yields the commutative diagram of finite-dimensional vector spaces over $\Q$ with exact rows:
\[
\begin{CD}  U @>>> V @>>> W @>>> 0\mbox{ }  \\
  @VVV @V{\phi_V}VV @V{\phi_W}VV \\
 U @>>> V @>>> W @>>>0 ,
\end{CD}
\]
where $\phi_W = \phi_C \otimes id$ and $\phi_V = \phi_* \otimes id$.
Let $K$ denote the kernel of the quotient map $V \rightarrow W$, and denote by $\phi_K$ the restriction of $\phi_V$ to the subspace $K$, which is easily checked to be invariant under $\phi_V$.  Since every exact sequence of vector spaces splits we may write $V= K \oplus W$ and $\phi_V = \phi_{K} \oplus \phi_W$. Therefore the characteristic polynomial of $\phi_V$ factors as
\[ \chi_{\phi_V}(\lambda) = \chi_{\phi_{K}}(\lambda) \cdot \chi_{\phi_W}(\lambda) .
\]

By Lemmas \ref{ordlemma} and \ref{tensorlemma}, $\phi_W$ preserves the bi-ordering on $W$ induced by that of $G$.  Now choose a basis for $W$ and apply
Proposition \ref{matrix} to conclude that $\chi_{\phi_W}(\lambda)$ has a positive real root.  It follows that this is also a root of $\chi_{\phi_V}(\lambda)$, and therefore an eigenvalue of $\phi$.
\end{proof}

The automorphism $\phi$ may also have eigenvalues which are not real.  For instance, in the example of \cite{LRR} mentioned above, the automorphism of the free group preserves a bi-order and has eigenvalues exactly the $n$-th roots of unity.  

Dave Witte Morris has pointed out that Theorem \ref{poseval} can be strengthened by replacing ``bi-ordering" with the more general ``Conrad ordering" \cite{conrad}, such that to every pair $g, h$ of positive elements there is a positive integer $n$ such that $(gh)^n>hg$, using essentially the same proof.  We will not need this more general version here.

\section{Fibrations and fibred knots}

Suppose $X$ is a topological space and $f : X \to X$ a continuous function.  Then the mapping torus $M_f$ is defined by 
$$M_f  \cong X \times [0,1] / \sim $$
where one makes the identifications $(x,1) \sim (f(x),0)$.  An important instance is any space which is a (locally trivial) fibration over the circle $S^1$, in which the total space can be regarded as the mapping torus of a homeomorphism $f$ of the fibre.  The map $f$ is called the (topological) monodromy associated with the fibration, and is defined up to isotopy.

The fundamental group of a mapping torus is an HNN extension of $\pi_1(X)$.  If $f_\sharp : \pi_1(X) \to \pi_1(X)$ is
the induced map (homotopy monodromy), then
$$ \pi_1(M_f) \cong \langle \pi_1(X), t \; | \; t^{-1}gt = f_\sharp(g),  \forall g \in \pi_1(X) \rangle .$$

We have an exact sequence
$$1 \rightarrow \pi_1(X) \rightarrow \pi_1(M_f) \rightarrow \Z \rightarrow 1.$$

Lemma \ref{extension} implies the following:

\begin{proposition}
The fundamental group of $M_f$ is left-orderable if and only if $\pi_1(X)$ is left-orderable. 
\end{proposition}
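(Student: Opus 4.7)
The plan is to observe that this proposition is essentially a direct consequence of the short exact sequence displayed just before it together with Lemma \ref{extension}, exploiting the fact that $\Z$ is trivially left-orderable and that subgroups of left-orderable groups are left-orderable.

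For the forward direction, if $\pi_1(M_f)$ is left-orderable, then restricting any left-ordering on $\pi_1(M_f)$ to the subgroup $\pi_1(X)$ yields a left-ordering on $\pi_1(X)$. So there is nothing to do here beyond remarking that orderability is inherited by subgroups (a fact already noted in the paragraph preceding Lemma \ref{extension}).

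For the converse, suppose $\pi_1(X)$ is left-orderable. The short exact sequence
\[
1 \to \pi_1(X) \to \pi_1(M_f) \to \Z \to 1
\]
exhibits $\pi_1(M_f)$ as an extension of $\Z$ by $\pi_1(X)$. Since $\Z$ is clearly left-orderable (with its usual ordering), Lemma \ref{extension} applies and gives that $\pi_1(M_f)$ is left-orderable. This direction is where we use the exact sequence for mapping tori recalled above; no further analysis of the HNN structure or of the monodromy $f_\sharp$ is needed, since Lemma \ref{extension} only requires orderability of the kernel and quotient.

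There is essentially no obstacle: the only thing to be careful about is that the left-orderable conclusion of Lemma \ref{extension} is the unconditional one (no invariance of the order under conjugation is required), in contrast to the bi-orderable statement in the same lemma. This is why the proposition can be stated for \emph{all} mapping tori, without any hypothesis on how the monodromy acts on an ordering of $\pi_1(X)$.
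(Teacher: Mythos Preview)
Your proof is correct and matches the paper's approach: the paper simply writes ``Lemma \ref{extension} implies the following'' before stating the proposition, and your argument is precisely the intended unpacking of that sentence (using inheritance to subgroups for one direction and the extension lemma with the left-orderability of $\Z$ for the other).
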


\begin{proposition}\label{HNN}
The fundamental group of $M_f$ is bi-orderable if and only if $\pi_1(X)$ admits a bi-ordering which is preserved by 
$f_\sharp$.
\end{proposition}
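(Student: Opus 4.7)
The plan is to deduce this from Lemma \ref{extension} applied to the short exact sequence $1 \to \pi_1(X) \to \pi_1(M_f) \to \Z \to 1$ displayed just above. Since $\Z$ is bi-orderable, Lemma \ref{extension} reduces the bi-orderability of $\pi_1(M_f)$ to the existence of a bi-ordering of $\pi_1(X)$ that is preserved under conjugation by every element of $\pi_1(M_f)$. All that is left is to rephrase this conjugation-invariance condition as invariance under $f_\sharp$.

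Starting from the HNN presentation, conjugation by $t$ acts on $\pi_1(X)$ by the relation $t^{-1} g t = f_\sharp(g)$, so $t g t^{-1} = f_\sharp^{-1}(g)$. Conjugation by any element of $\pi_1(X)$ itself is inner, hence automatically preserves any bi-ordering of $\pi_1(X)$. Since $\pi_1(M_f)$ is generated by $t$ together with $\pi_1(X)$, and since the set of automorphisms of $\pi_1(X)$ that preserve a fixed bi-ordering is a subgroup of $\mathrm{Aut}(\pi_1(X))$, a bi-ordering of $\pi_1(X)$ is preserved by conjugation by every element of $\pi_1(M_f)$ if and only if it is preserved by $f_\sharp^{-1}$, equivalently by $f_\sharp$.

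For the forward implication, if $\pi_1(M_f)$ is bi-orderable, then the restriction of any bi-ordering to $\pi_1(X)$ is bi-invariant on $\pi_1(X)$ and is preserved by conjugation by $t$, hence by $f_\sharp$. For the reverse, given a bi-ordering of $\pi_1(X)$ preserved by $f_\sharp$, the observation above shows it is preserved by conjugation by every element of $\pi_1(M_f)$; combined with the evident bi-ordering of the quotient $\Z$, Lemma \ref{extension} produces a bi-ordering of $\pi_1(M_f)$.

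There is no substantive obstacle here; the proposition is essentially a translation of Lemma \ref{extension} through the HNN description. The only small thing to be careful about is the direction of the conjugation convention (whether conjugation by $t$ gives $f_\sharp$ or $f_\sharp^{-1}$), but since $f_\sharp$ is an automorphism, invariance under one is equivalent to invariance under the other, so no issue arises.
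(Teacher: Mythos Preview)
Your proof is correct and follows exactly the route the paper intends: the paper simply states that Lemma \ref{extension} implies the proposition, and you have spelled out the routine verification that conjugation-invariance in $\pi_1(M_f)$ amounts to $f_\sharp$-invariance on $\pi_1(X)$. There is nothing to add or correct.
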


\begin{proposition}
If $Y \to S^1$ is a fibration with fibre $F$, then $\pi_1(Y)$ is bi-orderable if and only if $\pi_1(F)$ is bi-orderable and the homotopy monodromy  $f_\sharp$ preserves a bi-ordering of $\pi_1(F)$.
\end{proposition}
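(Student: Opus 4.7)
The plan is to reduce this proposition directly to Proposition \ref{HNN} via the standard identification of a fibration over the circle with a mapping torus. In the preceding discussion the authors already observed that when $Y \to S^1$ is a locally trivial fibration with fibre $F$, the total space $Y$ is homeomorphic to the mapping torus $M_f$ of the topological monodromy $f : F \to F$, where $f$ is well-defined up to isotopy. Consequently $\pi_1(Y) \cong \pi_1(M_f)$, and the induced homotopy monodromy $f_\sharp : \pi_1(F) \to \pi_1(F)$ depends only on $f$ up to isotopy, hence is canonically determined by the fibration up to inner automorphisms of $\pi_1(F)$.

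With this identification in place, the proof is essentially a quotation. First I would note that Proposition \ref{HNN}, applied with $X = F$, asserts precisely that $\pi_1(M_f)$ is bi-orderable if and only if $\pi_1(F)$ admits a bi-ordering preserved by $f_\sharp$. Combining this with the identification $\pi_1(Y) \cong \pi_1(M_f)$ yields the equivalence claimed in the proposition in one direction immediately.

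The only point that deserves care is the ambiguity of $f_\sharp$ up to inner automorphism: different choices of basepoint or of a representative of $f$ in its isotopy class replace $f_\sharp$ by a composition with an inner automorphism of $\pi_1(F)$. I would handle this by observing that inner automorphisms automatically preserve any bi-ordering of $\pi_1(F)$ (since conjugation is order-preserving in a bi-ordered group: $g<h$ implies $xgx^{-1} < xhx^{-1}$), so the property ``$f_\sharp$ preserves a bi-ordering of $\pi_1(F)$'' is a well-defined property of the isotopy class of $f$, and hence of the fibration itself.

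I do not expect any real obstacle here; the proposition is a packaging of Proposition \ref{HNN} together with the classical fact that fibrations over $S^1$ are mapping tori of their monodromy. The mildly delicate point, and the one I would write out explicitly, is the invariance of the statement under the inner-automorphism ambiguity in the definition of $f_\sharp$.
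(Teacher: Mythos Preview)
Your proposal is correct and matches the paper's approach exactly: the paper states this proposition with no proof, treating it as an immediate consequence of Proposition~\ref{HNN} together with the identification of a fibration over $S^1$ as a mapping torus already recorded in the preceding paragraphs. Your added remark about the inner-automorphism ambiguity of $f_\sharp$ is a worthwhile clarification that the paper leaves implicit.
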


From Theorem \ref{poseval} we have the following.

\begin{corollary}\label{monodromy}
Suppose $Y$ fibres over $S^1$ with fibre $F$, $ \pi_1(Y)$ is bi-orderable and $\pi_1(F)$ is nontrivial and finitely generated.  Then the monodromy $f_\sharp$ must have a real positive eigenvalue.
\end{corollary}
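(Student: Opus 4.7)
The plan is to simply combine the previous proposition on fibrations with the main theorem of Section~3. Since $Y$ fibres over $S^1$ with fibre $F$, the total space $Y$ is homeomorphic to the mapping torus $M_f$ of the topological monodromy $f\colon F\to F$, and $f_\sharp$ is the induced homotopy monodromy on $\pi_1(F)$. The bi-orderability hypothesis on $\pi_1(Y)$ therefore triggers the fibration proposition stated just above the corollary.

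First I would invoke that proposition to conclude that $\pi_1(F)$ admits a bi-ordering $<$ which is preserved by $f_\sharp$. Combined with the standing hypotheses that $\pi_1(F)$ is nontrivial and finitely generated, this places us exactly in the setting of Theorem~\ref{poseval}, applied to the group $G=\pi_1(F)$ and the automorphism $\phi=f_\sharp$. Theorem~\ref{poseval} then yields a positive real eigenvalue of the induced map
\[
(f_\sharp)_{\ast}\otimes\mathrm{id}\colon H_1(\pi_1(F);\Q)\to H_1(\pi_1(F);\Q),
\]
which by the definition of eigenvalues at the beginning of Section~3 is precisely what is meant by an eigenvalue of $f_\sharp$.

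There is really no obstacle here: the corollary is a one-line deduction once the machinery is in place. The only point worth flagging is a small consistency check, namely that the notion of ``eigenvalue of the monodromy'' in the statement matches the notion defined in Section~3 via the action on $H_1(G;\Q)\cong G/G'\otimes\Q$; this is immediate since $f_\sharp$ is an endomorphism of the finitely generated group $\pi_1(F)$. No further work is needed.
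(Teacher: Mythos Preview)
Your proof is correct and follows exactly the paper's approach: the paper simply states that the corollary follows from Theorem~\ref{poseval}, with the implicit use of the immediately preceding proposition to obtain a bi-ordering of $\pi_1(F)$ preserved by $f_\sharp$. There is nothing to add.
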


This can be considered a partial converse to results of \cite{PR1} and \cite{PR2} which we state as follows:

\begin{theorem}\label{perron}
Suppose $Y$ fibres over $S^1$ with fibre $F$.  Suppose $\pi_1(F)$ is a finitely generated free group or the fundamental group of a compact orientable surface and that all eigenvalues of the homotopy monodromy $f_\sharp$ are real and positive.  Then $\pi_1(Y)$ is bi-orderable.  
\end{theorem}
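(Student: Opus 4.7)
The plan is to combine Proposition~\ref{HNN} with the main theorem's converse direction. By Proposition~\ref{HNN}, it suffices to construct a bi-ordering on $\pi_1(F)$ that is preserved by the homotopy monodromy $f_\sharp$. The general strategy is to filter $\pi_1(F)$ by its lower central series, build an $f_\sharp$-invariant bi-ordering on each graded piece, and then assemble these lexicographically.

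First I would recall that $\pi_1(F)$ is residually torsion-free nilpotent in both cases: for a free group this follows from Magnus's expansion into the formal power series ring $\Z\langle\langle X_1,\dots,X_n\rangle\rangle$, and for the fundamental group of a compact orientable surface this is a classical theorem of Baumslag. Writing $\gamma_i = \gamma_i \pi_1(F)$ for the lower central series, one therefore has $\bigcap_i \gamma_i = \{1\}$, and each quotient $A_i = \gamma_i/\gamma_{i+1}$ is a finitely generated free abelian group. Since $f_\sharp$ is an automorphism, it preserves each $\gamma_i$ and induces an automorphism of each $A_i$, and tensoring with $\Q$ an automorphism of $A_i \otimes \Q$.

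Next I would analyze the eigenvalues of these induced maps. The graded Lie ring $\bigoplus_i A_i \otimes \Q$ is the free Lie algebra on $H_1(F;\Q)$ in the free case, and a quotient of such a free Lie algebra by a known ideal in the surface case. In either situation the eigenvalues of $f_\sharp$ on $A_i \otimes \Q$ are products of $i$ eigenvalues of $f_\sharp$ on $A_1 \otimes \Q = H_1(F;\Q)$. Since by hypothesis the latter are all real and positive, so are the former. Applying the converse direction of Proposition~\ref{Pmain} to each $A_i$, one obtains for every $i$ a bi-ordering of $A_i$ preserved by the induced map.

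Finally I would stitch these together. Define a positive cone $P \subset \pi_1(F)$ by declaring that $g \in P$ if and only if, for the unique $i$ with $g \in \gamma_i \setminus \gamma_{i+1}$, the image of $g$ in $A_i$ is positive in the chosen bi-ordering. Because $\bigcap \gamma_i$ is trivial, this is well-defined on all nonidentity elements; because $[\gamma_i,\pi_1(F)] \subset \gamma_{i+1}$, the image of a conjugate $hgh^{-1}$ in $A_i$ agrees with that of $g$, so $P$ is invariant under conjugation and yields a bi-ordering; and because $f_\sharp$ descends to an order-preserving map on each $A_i$, the ordering is $f_\sharp$-invariant. The principal obstacle is the eigenvalue-on-graded-pieces identification using the free Lie algebra structure (together with ensuring the surface case reduces to a Lie-algebraic quotient where the same argument goes through); the residual torsion-free nilpotence facts and the final lexicographic assembly are, by comparison, routine.
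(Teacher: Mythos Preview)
The paper does not actually prove Theorem~\ref{perron}; it is quoted from \cite{PR1} and \cite{PR2} as a known result (note the sentence immediately preceding the statement). Your proposal is correct and is essentially the argument carried out in those references: filter $\pi_1(F)$ by its lower central series, use residual torsion-free nilpotence (Magnus for free groups; Labute's work for closed surface groups, which also gives that each $\gamma_i/\gamma_{i+1}$ is free abelian), observe that the induced map on each graded piece $A_i\otimes\Q$ is a subquotient of the $i$-th tensor power of the action on $H_1(F;\Q)$ so its eigenvalues are $i$-fold products of the original ones and hence positive real, apply the abelian criterion (Proposition~\ref{Pmain}) on each piece, and assemble lexicographically. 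One small attribution remark: the result you need for the closed-surface case (residual torsion-free nilpotence and the structure of the associated graded Lie algebra) is usually credited to Labute rather than Baumslag.
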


Recall that a knot $K$ in $S^3$, or more generally in a closed orientable 3-manifold $M$ is {\em fibred} if $M \setminus K$ fibres over $S^1$ with fibres being open surfaces.  It is well-known that for a fibred knot $K$ in $S^3$ or, more generally, in a homology sphere, the Alexander polynomial $\Delta_K(t)$ is the characteristic polynomial of the associated (homotopy) monodromy.  This implies the following.

\begin{theorem}\label{fibknot}
Suppose $K$ is a nontrivial fibred knot in a homology sphere and its knot group $\pi_1(M \setminus K)$ is bi-orderable.  Then $\Delta_K(t)$ has a positive real root.
\end{theorem}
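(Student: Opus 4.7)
The plan is to reduce the statement directly to Corollary \ref{monodromy}, coupled with the classical identification (recalled in the excerpt) of $\Delta_K(t)$ for a fibred knot in a homology sphere with the characteristic polynomial of the monodromy acting on $H_1$ of the fibre. The only work is to confirm that the hypotheses of Corollary \ref{monodromy} are met and that the two notions of ``eigenvalue of the monodromy'' agree.

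First, I would set $Y := M \setminus K$ and use the definition of a fibred knot to write $Y$ as the total space of a fibration over $S^1$ with open surface fibre $F$. This open fibre is homotopy equivalent to a compact Seifert surface for $K$, which has positive genus $g$ because $K$ is nontrivial; consequently $\pi_1(F)$ is a free group of rank $2g \geq 2$, in particular nontrivial and finitely generated. With $\pi_1(Y)$ bi-orderable by hypothesis, Corollary \ref{monodromy} then applies and produces a positive real eigenvalue of the map induced by the homotopy monodromy $f_\sharp$ on $H_1(\pi_1(F); \Q)$.

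Next, I would invoke the stated classical fact to identify this eigenvalue with a root of $\Delta_K(t)$. Because $\pi_1(F)$ is free (so $F$ is a $K(\pi,1)$), the Hurewicz isomorphism gives $H_1(\pi_1(F); \Q) \cong H_1(F; \Q)$ as modules over the monodromy, so the eigenvalue produced by Corollary \ref{monodromy} is literally a root of the characteristic polynomial of the monodromy on $H_1(F; \Q)$, i.e. of $\Delta_K(t)$. The argument is essentially bookkeeping; the only point at which one could slip is in reconciling the two eigenvalue conventions, namely the action on the abelianized fundamental group versus the action on first homology of the fibre, and the Hurewicz identification removes that minor obstacle at once.
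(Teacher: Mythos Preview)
Your proposal is correct and follows the same route as the paper: both invoke Corollary~\ref{monodromy} together with the classical identification of $\Delta_K(t)$ with the characteristic polynomial of the monodromy on $H_1$ of the fibre. You simply spell out in more detail why $\pi_1(F)$ is nontrivial and finitely generated (free of rank $2g\ge 2$) and why the eigenvalue on $H_1(\pi_1(F);\Q)$ coincides with one on $H_1(F;\Q)$, points the paper leaves implicit.
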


Theorem \ref{perron}, on the other hand, implies that if {\em all} roots of a fibred knot's Alexander polynomial are real and positive, then its knot group is bi-orderable.

In fact, by the well-known symmetry condition of Alexander polynomials $\Delta_K(t)$ will have two positive real roots (mutual reciprocals) if it has one.  Since $\Delta_K(1) = \pm 1$, $t=1$ is never a root.
Among nontrivial prime knots of up to eight crossings, we have the following data, compiled with the assistance of the website {\em knotinfo}.

\noindent {\bf Examples:}
Among prime knots of up to eight crossings, the unknot and the fibred knots $4_1$ and $8_{12}$ have bi-orderable knot groups.  Their Alexander polynomials are:

\medskip

$\Delta_{4_1} = 1-3t+t^2$ with roots ${(3 \pm \sqrt{5})}/2.$

\medskip

$\Delta_{8_{12}} = 1-7t+13t^2-7t^3+t^4$ with roots (rounded to five decimals) 

$0.22778, 0.54411, 1.83785$ and $4.39026$ 

\medskip

Among the other seventeen fibred knots with at most eight crossings, the following have Alexander polynomials with no real roots, and therefore their knot groups are {\em not} bi-orderable: $3_1, 5_1, 6_3,  7_1, 7_7, 8_7, 8_{10}, 8_{16}, 8_{19}$ and $8_{20}$. 

\medskip 

For example $\Delta_{8_{19}} = 1-t+t^3-t^5+t^6 = (t^2 +\sqrt{3} t + 1)(t^2 -\sqrt{3} t + 1)(t^2 - t + 1)$.  
Its six roots are $(\sqrt{3} \pm i)/2, (-\sqrt{3} \pm i)/2$ and $(1 \pm i\sqrt{3} )/2$

\medskip

The polynomials of the remaining seven fibred knots up to eight crossings have both real and complex roots; we do not know if their groups are bi-orderable.  They are: $6_2, 7_6, 8_2, 8_5, 8_9, 8_{17}$ and $8_{18}$.

\medskip

The torus knots of type $(p, q)$, where $p$ and $q$ are coprime integers greater than one, have Alexander polynomial 
$$\Delta (t) = \frac{(1 - t)(1- t^{pq})}{(1 - t^p)(1- t^q)}$$
whose roots are all in $\C \setminus \R$, on the unit circle.  The groups of torus knots are therefore not bi-orderable, a fact which was already known, as they are fibred and their monodromy is periodic.

\medskip

The ``Fintushel-Stern" knot \cite{FS}, also known as the pretzel knot of type $(-2, 3, 7)$, is well-known for several reasons, such as admitting seven exceptional surgeries (see Cameron Gordon's discussion in Problem 1.77 of  Kirby's 
problem list \cite{kirby}).  It is fibred and its Alexander polynomial is $L(-t)$, where $L$ is the Lehmer polynomial
$$L(x) = 1 + x - x^3 - x^4 - x^5 - x^6 - x^7 + x^9 + x^{10}$$
which is the integral polynomial of smallest known Mahler measure \cite{boyd, hironaka}.  Among the ten roots 
of $L$, two
are real, the Salem number $1.17628 . . .$ and its reciprocal, and eight are complex, lying on the unit circle.  Therefore
the Alexander polynomial of the $(-2, 3, 7)$ pretzel knot has exactly two real roots and they are {\em negative.}  It follows that its knot group is not bi-orderable.  (We thank Liam Watson for pointing out this example.) 

Experimental evidence (see the final section of this paper) indicates that bi-orderability of the groups of approximately one-third the fibred knots can be decided by Theorems \ref{perron} and \ref{fibknot}.  

\section{Surgery}

We recall the definition of surgery on a knot $K$ in a 3-manifold $M$.  Consider a closed regular neighbourhood $N$ of $K$, so that $N \cong D^2 \times S^1$ and $\partial N \cong S^1 \times S^1$.  If $J$ is a homologically nontrivial simple closed curve on $\partial N$, it defines the surgery manifold:
$$M(K,J) = (M \setminus int N) \cup D^2 \times S^1 / \sim$$
where the boundaries are identified in such a way that $J$ is sewn to the meridian $\partial D^2 \times \{*\}$ of $D^2 \times S^1$.

This manifold is well-defined by the isotopy class of $J$ in $\partial N$, which in turn is determined by its homology class, up to sign, 
$\pm [J] = p\mu + q\lambda$, where $p$ and $ q$ are relatively prime integers (possibly the pair $\{0,1\}$) and 
$\mu, \lambda$ are a basis for the homology $H_1(\partial N) \cong \Z \oplus \Z$.  By convention, we take $\mu$ to correspond to the meridian which bounds a disk in $N$, with some chosen orientation, and $\lambda$ to be isotopic to $K$ in $N$.  There is a ``framing'' ambiguity for $\lambda$ unless $M$ is a homology sphere (that is, its homology groups coincide with those of $S^3$), in which case it corresponds to a curve on $\partial N$ which is homologically trivial in $M \setminus int N$.  With this convention one also refers to $M(K,J)$ as the result of $p/q$ surgery on $K$, where $p/q \in \Q \cup \infty$.  The ``trivial'' surgery corresponds to $\infty$, in which case the surgery manifold is just $M$ itself.  See \cite{KL} for further information on surgery.

\begin{theorem}\label{surgery}
Suppose $K$ is a fibred knot in $S^3$ and nontrivial surgery on $K$ produces a 3-manifold $M$ whose fundamental group is bi-orderable.  Then the surgery must be longitudinal (that is, 0-framed) and  
$\Delta_K(t)$ has a positive real root.  Moreover, $M$ fibres over $S^1$.
\end{theorem}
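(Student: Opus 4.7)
My plan is to reduce to the longitudinal case $p=0$, observe that in that case $M$ itself fibres over $S^1$, and then apply Corollary \ref{monodromy} to produce the positive real root of $\Delta_K(t)$. For the reduction, write $G=\pi_1(M)$ and note that $H_1(M;\Z)\cong\Z/p\Z$ for $p/q$-surgery, since the longitude $\lambda$ of $K$ is null-homologous in the knot exterior. Suppose $p\ne 0$, so $G^{\mathrm{ab}}$ is finite. If $G$ is nontrivial, Lemma \ref{ordlemma} supplies a maximal proper convex normal subgroup $C\triangleleft G$ with $G/C$ a \emph{nontrivial} torsion-free abelian group; since $[G,G]\subseteq C$, the induced surjection $G^{\mathrm{ab}}\twoheadrightarrow G/C$ would force a finite abelian group to surject onto a nontrivial torsion-free one, which is impossible. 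Hence $G=1$, so $M$ is a homotopy $3$-sphere, which by the Poincar\'e conjecture is $S^3$, contradicting Gordon--Luecke (equivalently, Kronheimer--Mrowka's Property P) that nontrivial surgery on a nontrivial knot never yields $S^3$. Therefore $p=0$.

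For $p=0$ the attached solid torus is glued so that its meridian bounds a disk along $\lambda=\partial F$. Choosing the monodromy $f:F\to F$ of the original fibration to be the identity on $\partial F$, this disk caps off every fibre to the closed surface $\hat F=F\cup D^2$, and the fibration $S^3\setminus N(K)\to S^1$ extends to a fibration $M\to S^1$ with fibre $\hat F$ and monodromy $\hat f$ equal to $f$ on $F$ and the identity on the cap.

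Applying Proposition \ref{HNN} to this mapping-torus structure shows that $\pi_1(\hat F)$ admits a bi-ordering preserved by $\hat f_\sharp$; since $K$ is nontrivial and fibred, its fibre has genus at least one, so $\pi_1(\hat F)$ is nontrivial and finitely generated, and Corollary \ref{monodromy} supplies a positive real eigenvalue of $\hat f_\sharp$ on $H_1(\hat F;\Q)$. Because $\partial F$ is a product of commutators in $\pi_1(F)$, it is null-homologous in $F$, so the inclusion $F\hookrightarrow\hat F$ induces an isomorphism on rational $H_1$ intertwining $f_*$ and $\hat f_*$. Since $\Delta_K(t)$ equals, up to a unit in $\Z[t,t^{-1}]$, the characteristic polynomial of $f_*:H_1(F;\Q)\to H_1(F;\Q)$, this positive real eigenvalue is precisely a positive real root of $\Delta_K(t)$.

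The main obstacle is the reduction step: one must combine the convex-subgroup structure of bi-ordered groups (Lemma \ref{ordlemma}) with the topological input above to eliminate the possibility $\pi_1(M)=1$, which only occurs when $|p|=1$ and is ruled out by Perelman together with Gordon--Luecke. Once the reduction is in place, everything else is a direct application of the main theorem through Corollary \ref{monodromy} together with the classical identification of the Alexander polynomial of a fibred knot with the characteristic polynomial of its monodromy on $H_1$ of the fibre.
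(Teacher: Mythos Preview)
Your proof is correct and follows essentially the same route as the paper's: compute $H_1(M)\cong\Z/p\Z$, use Lemma~\ref{ordlemma} to force $p=0$ (ruling out $\pi_1(M)=1$ via Property~P, or equivalently Gordon--Luecke plus Poincar\'e as you do), extend the fibration over the capping disk, and apply Corollary~\ref{monodromy} together with the identification of $\Delta_K$ with the characteristic polynomial of the monodromy on $H_1$ of the fibre. The only cosmetic difference is that the paper invokes Property~P directly to assert $\pi_1(M)\ne 1$ before applying Lemma~\ref{ordlemma}, whereas you reach the same point by contradiction.
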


\begin{proof}
Let $M$ denote the result of $p/q$ surgery on $K$ in $S^3$.  We calculate, by a Meyer-Vietoris argument that $H_1(M) \cong \Z/p\Z$, which is a finite group unless $p = 0$.  By property P, $\pi_1(M)$ is nontrivial, so by Lemma \ref{ordlemma} its bi-orderability implies that $H_1(M)$ must be infinite, so we conclude that
$p=0$; the surgery curve $J$ is a longitude.  The neighborhood $N$ of $K$ for defining the surgery may be chosen so that $S^3 \setminus int N$ fibres over $S^1$ with each fibre an oriented surface $\Sigma$ with
boundary $\partial \Sigma$ a longitude parallel to $J$ on $\partial N$.  Now the solid torus $D^2 \times S^1$ also
clearly fibres over $S^1$ via projection onto the second coordinate, with fibre $D^2$.  Thus $M$ may be fibred by
matching these two fibrations, and the fibre is the closed surface $\hat{\Sigma}$ obtained by sewing a disk to $\Sigma$ along their boundaries.  The abelianizations of $\pi_1(\Sigma)$ and $\pi_1(\hat{\Sigma})$ coincide, and we see that the
monodromies of the fibrations of $M$ and $S^3 \setminus K$ also coincide upon abelianization, and so have the same characteristic polynomial, which is $\Delta_K(t)$.  By Corollary \ref{monodromy}, this polynomial has a positive real root.
\end{proof}

The following has almost the same proof.

\begin{theorem}
Suppose $K$ is a fibred knot in a homology sphere and nontrivial surgery on $K$ produces a 3-manifold $M$ whose fundamental group is nontrivial and bi-orderable.  Then the surgery must be longitudinal (that is, 0-framed) and  
$\Delta_K(t)$ has a positive real root.  Moreover, $M$ fibres over $S^1$.
\end{theorem}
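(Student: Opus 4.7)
The plan is to parallel the proof of the preceding Theorem \ref{surgery} nearly verbatim, identifying the single step where the hypothesis $S^3$ was used and replacing it with the new hypothesis that $\pi_1(M)$ is nontrivial. Let $M$ denote the result of $p/q$ surgery on $K$ in a homology sphere $\Sigma$. First I would redo the Mayer--Vietoris calculation, decomposing $M$ into the knot exterior $\Sigma \setminus \mathrm{int}\,N$ and the sewn-in solid torus $D^2 \times S^1$ glued along $\partial N = T^2$. Since $\Sigma$ is a homology sphere, the homology of the knot exterior agrees with that in the $S^3$ case, so the calculation gives $H_1(M) \cong \Z/p\Z$ exactly as before; in particular, $H_1(M)$ is finite unless $p = 0$.

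In the $S^3$ version, Property P was invoked to force $\pi_1(M)$ to be nontrivial. That is not available in an arbitrary homology sphere, but the theorem now assumes it outright. Since $M$ is a closed $3$-manifold, $\pi_1(M)$ is finitely generated, and by assumption it is nontrivial and bi-orderable. Lemma \ref{ordlemma} then yields a maximal proper convex subgroup $C \subsetneq \pi_1(M)$ with quotient $\pi_1(M)/C$ a nontrivial torsion-free abelian group; this quotient factors through $H_1(M)$, so $H_1(M)$ must be infinite. Combined with the Mayer--Vietoris computation, this forces $p = 0$ and hence the surgery is longitudinal. This is the one step that genuinely differs from the $S^3$ case, and it is the main point where the hypotheses of the theorem do the work.

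From here the argument is identical to the $S^3$ case. A suitable tubular neighbourhood $N$ of $K$ can be chosen so that the knot exterior fibres over $S^1$ with fibre an oriented surface $F$ whose boundary is a longitude on $\partial N$ parallel to $J$. Matching this fibration with the disk fibration of $D^2 \times S^1$ extends to a fibration $M \to S^1$ whose closed fibre $\hat F$ is obtained by capping off $F$ with a disk. The abelianizations of $\pi_1(F)$ and $\pi_1(\hat F)$ coincide, so the induced action of the monodromy on $H_1(\hat F;\Q)$ has characteristic polynomial $\Delta_K(t)$. Finally I would apply Corollary \ref{monodromy} to the fibration $M \to S^1$: since $\pi_1(M)$ is bi-orderable and $\pi_1(\hat F)$ is nontrivial and finitely generated (the fibre of a nontrivial fibred knot has positive genus), the monodromy must have a positive real eigenvalue, which is a positive real root of $\Delta_K(t)$.
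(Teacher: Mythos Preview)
Your proposal is correct and follows exactly the approach the paper intends: the paper itself states only that this theorem ``has almost the same proof'' as Theorem~\ref{surgery}, and you have correctly identified that the sole difference is replacing the appeal to Property~P (valid only in $S^3$) by the explicit hypothesis that $\pi_1(M)$ is nontrivial. Your use of Lemma~\ref{ordlemma} to pass from nontrivial bi-orderable $\pi_1(M)$ to infinite $H_1(M)$ is precisely the argument in the paper's proof of Theorem~\ref{surgery}, and the rest of your argument is verbatim the same.
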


Similar considerations hold for higher dimensional fibred knots, although in that case the fibre may not necessarily have bi-orderable (or even torsion free) fundamental group.

\begin{question}
Is there a version of this theory for non-fibred knots?
\end{question}

\begin{question}
What about fibred knots which have some, but not all, roots of their Alexander polynomial real and positive?
\end{question}

\begin{question}
If a knot in $S^3$ has bi-orderable group, does every manifold resulting from surgery on that knot have left-orderable fundamental group?
\end{question}

\section{$L$-spaces}

This section is devoted to the proof of the following theorem.

\begin{theorem}
\label{L-surgery}
If surgery on a knot $K$ in $S^3$ results in an $L$-space, then the knot group $\pi_1(S^3 \setminus K)$ is not bi-orderable.

\end{theorem}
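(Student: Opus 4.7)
The plan is to reduce \textbf{Theorem \ref{L-surgery}} to \textbf{Theorem \ref{fibknot}} by invoking two deep Heegaard--Floer results as black boxes, and then ruling out positive real roots of the Alexander polynomial by an elementary sign-grouping argument. Throughout, I would read ``surgery'' as nontrivial surgery (the $\infty$-surgery restores $S^{3}$ trivially), and I would assume $K$ is nontrivial, since the unknot is explicitly excluded in the introductory statement of Theorem \ref{L-space}.

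So assume for contradiction that $K$ is a nontrivial knot in $S^{3}$ whose knot group is bi-orderable and that some nontrivial surgery on $K$ produces an $L$-space. First I would quote Ni's theorem from \cite{ni}: any knot in $S^{3}$ admitting an $L$-space surgery is fibred. Thus $K$ is a nontrivial fibred knot, and Theorem \ref{fibknot} forces $\Delta_{K}(t)$ to possess a positive real root. Next I would quote the Ozsv\'ath--Szab\'o structural theorem on $L$-space knots: after multiplying by a suitable power of $t$, the Alexander polynomial can be written
$$\Delta_{K}(t)=\sum_{j=0}^{2k}(-1)^{j}\,t^{s_{j}},\qquad 0=s_{0}<s_{1}<\cdots<s_{2k},$$
so its nonzero coefficients are $\pm1$ and strictly alternate in sign.

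To finish, I would show that such a $\Delta_{K}$ has no positive real root, directly contradicting Theorem \ref{fibknot}. For $t\ge 1$, group from the bottom:
$$\Delta_{K}(t)=t^{s_{0}}+\bigl(t^{s_{2}}-t^{s_{1}}\bigr)+\bigl(t^{s_{4}}-t^{s_{3}}\bigr)+\cdots+\bigl(t^{s_{2k}}-t^{s_{2k-1}}\bigr);$$
since $s_{2i}>s_{2i-1}$, every parenthesised block is non-negative, giving $\Delta_{K}(t)\ge t^{s_{0}}=1>0$. For $0<t\le1$, group from the top:
$$\Delta_{K}(t)=\bigl(t^{s_{0}}-t^{s_{1}}\bigr)+\bigl(t^{s_{2}}-t^{s_{3}}\bigr)+\cdots+\bigl(t^{s_{2k-2}}-t^{s_{2k-1}}\bigr)+t^{s_{2k}};$$
since $s_{2i-2}<s_{2i-1}$ and $t<1$, each block is again non-negative, giving $\Delta_{K}(t)\ge t^{s_{2k}}>0$. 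So $\Delta_{K}(t)>0$ for every $t>0$, contradicting the existence of a positive real root and completing the proof. The main obstacle is nothing internal to the argument — the telescoping computation is one line — but rather correctly citing and aligning conventions for the two external inputs (Ni's fibredness theorem and the Ozsv\'ath--Szab\'o constraint on $\Delta_{K}$); once the sign-alternating shape of $\Delta_{K}$ is in hand, positivity on $(0,\infty)$ is automatic.
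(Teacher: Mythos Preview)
Your argument is correct and follows essentially the same route as the paper: invoke Ni to get fibredness, invoke Ozsv\'ath--Szab\'o for the alternating-sign shape of $\Delta_K$, show elementarily that such a polynomial is strictly positive on $(0,\infty)$, and conclude via Theorem~\ref{fibknot}. The only cosmetic differences are that the paper writes $\Delta_K$ in its symmetric Laurent form $(-1)^k+\sum_{j=1}^k(-1)^{k-j}(t^{n_j}+t^{-n_j})$ and uses the symmetry $\alpha\leftrightarrow\alpha^{-1}$ to reduce to $\alpha>1$ (via a monotonicity lemma for $x\mapsto\alpha^x+\alpha^{-x}$), whereas you normalize to a genuine polynomial and handle $t\ge1$ and $0<t\le1$ by two separate pairings; the content is identical.
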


As noted in \cite{ni}, if surgery on a knot yields an $L$-space, then the knot is fibred and also an {\em integer} surgery on the knot produces an $L$-space.  
From \cite{OS2}, we have the following theorem.

\begin{theorem}
\label{th:OS}
Let $K$ be a knot in $S^3$ for which integer surgery on $K$ yields an $L$-space.  Then the Alexander polynomial of $K$ has the form 
\[ \Delta_K(t) = (-1)^k+ \sum_{j=1}^k(-1)^{k-j}(t^{n_j} + t^{-n_j})
\]
for some increasing sequence of positive integers $0 < n_1 < n_2 < \cdots < n_k$.
\end{theorem}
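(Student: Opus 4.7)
The plan is to derive the stated form of $\Delta_K(t)$ from the structure of knot Floer homology of $L$-space knots. The three ingredients I would build the argument around are (i) the graded Euler characteristic formula
$$\Delta_K(t) \doteq \sum_{s \in \Z} \chi\!\left(\widehat{HFK}(K, s)\right) t^s,$$
(ii) the Ozsv\'ath--Szab\'o large surgery formula identifying $\widehat{HF}$ of large positive integer surgeries on $K$ with the homology of subquotient complexes $A_s^+$ of $CFK^\infty(K)$, and (iii) the conjugation symmetry $\widehat{HFK}(K, s) \cong \widehat{HFK}(K, -s)$ with its appropriate Maslov grading shift.

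First I would pass to sufficiently large positive integer surgery. Since the set of $L$-space surgery slopes on a knot forms a half-line, the hypothesis that some integer surgery on $K$ yields an $L$-space implies that $S^3_n(K)$ is an $L$-space for all sufficiently large $n$, so the large surgery formula applies without caveat. Via that formula, the rank equality $\mathrm{rk}\,\widehat{HF}(S^3_n(K)) = n$ decomposes into constraints on the homology of each $A_s^+$, one per $\mathrm{Spin}^c$ structure.

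The central step is to upgrade the global $L$-space condition to the local statement that $\mathrm{rk}\,\widehat{HFK}(K, s) \le 1$ for every $s$. Each $\mathrm{Spin}^c$ summand of $\widehat{HF}(S^3_n(K))$ must contribute rank exactly one, and unwinding the definitions of $A_s^+$ in terms of $CFK^\infty$ shows that this forces $\widehat{HFK}(K, s) \cong 0$ or $\Z$. Moreover, as $s$ runs through the set of gradings where $\widehat{HFK}$ is nonzero, the Maslov gradings of the generators must strictly alternate in parity; otherwise two consecutive generators would survive in the homology of the surgered manifold and inflate its rank. Inserting these facts into the Euler characteristic formula gives $\Delta_K(t) \doteq \sum_j (-1)^{\varepsilon_j}\, t^{s_j}$, a Laurent polynomial with alternating-sign unit coefficients.

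Finally I would invoke the symmetry (iii): the exponents $s_j$ must pair into $\pm n_j$ (with one possible unpaired central term), producing an expression of the form $\sum_j \pm (t^{n_j} + t^{-n_j}) + \varepsilon$. The normalization $\Delta_K(1) = \pm 1$, together with the forced alternation of signs, then pins the signs down to be exactly $(-1)^{k-j}$ and the constant term to be $(-1)^k$, producing the displayed formula. The main obstacle is the rank-one step: translating the global rank condition on $\widehat{HF}(S^3_n(K))$ into the thinness of $\widehat{HFK}$ requires a careful analysis of the subquotient complexes $A_s^+$ and the vertical/horizontal differentials in $CFK^\infty$. Once that is in hand, the remainder is combinatorial bookkeeping with the Euler characteristic and the conjugation symmetry of knot Floer homology.
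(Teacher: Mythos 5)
The paper does not prove this statement at all: it is quoted verbatim from Ozsv\'ath and Szab\'o \cite{OS2} (their theorem on knots admitting lens-space, and more generally $L$-space, surgeries), so there is no internal proof to compare against. Your outline is, in fact, a faithful sketch of the original Ozsv\'ath--Szab\'o argument: reduce to large positive integer surgeries, apply the large surgery formula $\widehat{HF}(S^3_N(K),[s])\cong H_*(\hat{A}_s)$, deduce that each $\widehat{HFK}(K,s)$ has rank at most one with alternating Maslov parities, and then recover the coefficients from the graded Euler characteristic together with conjugation symmetry and the normalization $\Delta_K(1)=1$. The final combinatorial step is sound: symmetry plus unit coefficients with alternating signs forces an odd number $2k+1$ of nonzero coefficients (an even number would give $\Delta_K(1)=0$), and $\Delta_K(1)=1$ then fixes the overall sign, yielding exactly the displayed form.

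The one place where your proposal has real content missing is the step you yourself flag as ``the main obstacle'': the passage from the global rank condition $\mathrm{rk}\,\widehat{HF}(S^3_N(K))=N$ to the local statement that each $\widehat{HFK}(K,s)$ is $0$ or $\Z$ with strictly alternating Maslov parities across consecutive nonzero Alexander gradings. You assert that ``unwinding the definitions of $A_s^+$'' gives this, but that unwinding is precisely the technical heart of the Ozsv\'ath--Szab\'o theorem: one must run an induction on the Alexander filtration of $CFK^\infty$, comparing $H_*(\hat{A}_s)$ for successive $s$ and using the vertical and horizontal differentials to rule out consecutive generators of the same parity and to show no two generators share an Alexander grading. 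Without that analysis the proposal is a correct roadmap rather than a proof. Since the paper itself uses the result as a citation, the honest options are either to cite \cite{OS2} as the authors do, or to carry out the $\hat{A}_s$ analysis in full; as written, your argument sits in between.
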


We will show that such an Alexander polynomial can never have a positive real root.

\begin{lemma}
\label{lem:ineq}
Suppose that $\alpha > 1$ and $s >t >0$.  Then $\alpha^s + \alpha^{-s} > \alpha^t + \alpha^{-t}$.
\end{lemma}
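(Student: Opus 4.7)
The plan is to prove the lemma by a direct algebraic manipulation that bypasses any calculus. I would start from the inequality to be proved, $\alpha^s + \alpha^{-s} > \alpha^t + \alpha^{-t}$, and rearrange it into the equivalent form
\[
\alpha^s - \alpha^t > \alpha^{-t} - \alpha^{-s}.
\]
Both sides should then be factored by pulling out $\alpha^{s-t}-1$: the left side equals $\alpha^{t}(\alpha^{s-t}-1)$ and the right side equals $\alpha^{-s}(\alpha^{s-t}-1)$.

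Since $\alpha > 1$ and $s > t$, the common factor $\alpha^{s-t}-1$ is strictly positive, so the inequality reduces to $\alpha^t > \alpha^{-s}$. This in turn is equivalent to $\alpha^{t+s} > 1$, which holds because $\alpha > 1$ and $t+s > 0$ (indeed both $s$ and $t$ are positive). This chain of equivalences, read in reverse, yields the desired strict inequality.

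An alternative would be to set $f(x) = \alpha^x + \alpha^{-x}$ and observe $f'(x) = \ln(\alpha)(\alpha^x - \alpha^{-x})$, which is positive for $x>0$ since $\ln\alpha > 0$ and $\alpha^x > 1 > \alpha^{-x}$; then $f$ is strictly increasing on $(0,\infty)$ and the conclusion follows from $s > t > 0$. I prefer the algebraic route because it avoids invoking differentiation of an exponential and makes clear where each hypothesis is used.

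There is essentially no obstacle here: the only subtlety is checking that the factor $\alpha^{s-t}-1$ is nonzero (so the inequality is preserved rather than becoming an equality), and this is immediate from $\alpha > 1$ and $s > t$. The hypothesis $t > 0$ (as opposed to $t \geq 0$) is not strictly needed, but it guarantees $t + s > 0$ with room to spare.
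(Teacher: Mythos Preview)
Your proof is correct. The algebraic factorisation works exactly as you claim: both sides of $\alpha^s - \alpha^t > \alpha^{-t} - \alpha^{-s}$ share the strictly positive factor $\alpha^{s-t}-1$, and cancelling it reduces the question to $\alpha^{t+s}>1$.

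The paper, in fact, takes precisely the calculus route you mention as an alternative: it sets $f(x)=\alpha^x+\alpha^{-x}$, computes $f'(x)=\ln(\alpha)(\alpha^x-\alpha^{-x})$, and observes this is positive on $(0,\infty)$. So your secondary approach coincides with the paper's proof, while your preferred algebraic argument is a genuinely different and slightly more elementary route. The algebraic version has the minor advantage of avoiding differentiation altogether and making the role of each hypothesis transparent (as you note, $t>0$ is not really needed, only $s+t>0$); the calculus version has the advantage of immediately exhibiting $f$ as strictly increasing on all of $(0,\infty)$, which is a marginally stronger statement. Either is entirely adequate here.
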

\begin{proof}
For $\alpha > 1$, consider the function $f(x) = \alpha^x + \alpha^{-x}$.  It is continuous and differentiable for all $x$, with derivative $f'(x) = \ln(\alpha)(\alpha^x - \alpha^{-x})$.  Since $\alpha >1$, both $\ln(\alpha)$ and $\alpha^x - \alpha^{-x}$ are positive whenever $x >0$, hence $f'(x) >0$ for all $x>0$, and so $f$ is an increasing function on $(0, \infty)$.   Therefore, $s>t>0$ implies $f(s) >f(t)$, in other words  $\alpha^s + \alpha^{-s} > \alpha^t + \alpha^{-t}$.
\end{proof}

\begin{proposition}
\label{prop:norr}
Let $0 < n_1 < n_2 < \cdots < n_k$ be an increasing sequence of positive integers. Then the polynomial 
\[ \Delta_K(t) = (-1)^k+ \sum_{j=1}^k(-1)^{k-j}(t^{n_j} + t^{-n_j})
\]
 does not have a positive real root.
\end{proposition}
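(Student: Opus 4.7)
The plan is to show the stronger claim that $\Delta_K(t) > 0$ for every $t > 0$, which obviously rules out positive real roots. The idea is to rewrite $\Delta_K(t)$ as an alternating sum of the quantities $a_j := t^{n_j} + t^{-n_j}$ and then pair consecutive terms so as to exploit the monotonicity supplied by Lemma \ref{lem:ineq}.

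First I would handle $t = 1$ separately, where every $a_j$ equals $2$ and a short calculation gives $\Delta_K(1) = 1$ irrespective of the parity of $k$. From now on assume $t > 0$ with $t \ne 1$. Set $\alpha = \max(t, 1/t) > 1$, so that $a_j = \alpha^{n_j} + \alpha^{-n_j}$. By Lemma \ref{lem:ineq} applied to this $\alpha$, the strict inequalities
\[
 2 < a_1 < a_2 < \cdots < a_k
\]
hold (the leftmost inequality follows from AM--GM, or from Lemma \ref{lem:ineq} comparing with the limit value at $0$).

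Now write
\[
\Delta_K(t) = a_k - a_{k-1} + a_{k-2} - \cdots + (-1)^{k-1}a_1 + (-1)^k,
\]
and split on the parity of $k$. If $k$ is even, pair consecutive terms starting from the top:
\[
\Delta_K(t) = (a_k - a_{k-1}) + (a_{k-2} - a_{k-3}) + \cdots + (a_2 - a_1) + 1,
\]
which is a sum of positive quantities. If $k$ is odd, pair instead
\[
\Delta_K(t) = (a_k - a_{k-1}) + \cdots + (a_3 - a_2) + (a_1 - 1),
\]
and again each pair is positive because $a_{2j+1} > a_{2j}$ and $a_1 > 2 > 1$. Either way, $\Delta_K(t) > 0$.

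There is no real obstacle here; the only delicate point is the bookkeeping to make sure the signs and parity cases are correct, and that $a_1 > 1$ (not just $a_1 > a_0$) in the odd case to absorb the lone constant term $(-1)^k = -1$. Lemma \ref{lem:ineq} and the trivial bound $a_j > 2$ for $t \ne 1$ together handle this cleanly, and the conclusion follows immediately.
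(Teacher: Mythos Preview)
Your proof is correct and follows essentially the same approach as the paper's own argument: both handle $t=1$ separately, reduce to $\alpha>1$ (you via $\alpha=\max(t,1/t)$, the paper via the symmetry $\alpha\leftrightarrow\alpha^{-1}$), and then pair consecutive terms $a_{i+1}-a_i>0$ using Lemma~\ref{lem:ineq}, splitting on the parity of $k$. The only cosmetic difference is that you state the conclusion in the stronger form $\Delta_K(t)>0$ for all $t>0$, which the paper's pairing in fact also establishes.
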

\begin{proof}
Suppose $\alpha$ is a positive real root of $\Delta_K(t)$.  We observe that $\alpha = 1$ is not possible, because 
\[ \Delta_K(1) = (-1)^k+ 2\sum_{j=1}^k(-1)^{k-j}, 
\]
which we may rewrite as $\Delta_K(1) = -1+ 2 \cdot 1 =1 $ if $k$ is odd, and  $\Delta_K(1) = 1+ 2 \cdot 0 =1 $ if $k$ is even.  Moreover, as $\Delta_K(t)$ is symmetric, $\alpha^{-1}$ must also be a root of $\Delta_K(t)$,  and so we may assume without loss of generality that $\alpha>1$. 

First, we consider the case when $k$ is odd, so that the quantity $\Delta_K(\alpha)$ can be written as
\[ -1 + (\alpha^{n_1}+\alpha^{-n_1}) - (\alpha^{n_2}+\alpha^{-n_2}) + \cdots +(\alpha^{n_k}+\alpha^{-n_k}).
\]
Observe that $\alpha^{n_1}+\alpha^{-n_1} -1 >0$, since $\alpha >1$ and $n_1 >0$.  For integers $i$ satisfying $1<i<k$, since $\alpha>1$ and $n_{i+1}>n_i$ we may apply Lemma \ref{lem:ineq} to conclude $(\alpha^{n_{i+1}}+\alpha^{-n_{i+1}}) - (\alpha^{n_{i}}+\alpha^{-n_{i}}) >0$.  Therefore the quantity $\Delta_K(\alpha)$ can be written as
\[ -1 + (\alpha^{n_1}+\alpha^{-n_1}) + \sum_{\substack{i=2\\i \text{ even}}}^{k-1} \left[ (\alpha^{n_{i+1}}+\alpha^{-n_{i+1}}) - (\alpha^{n_{i}}+\alpha^{-n_{i}}) \right] ,
\]
which is a sum of positive terms and so cannot be zero.

In the case that $k$ is even, we may write the quantity $\Delta_K(\alpha)$ as
\[ 1 - (\alpha^{n_1}+\alpha^{-n_1}) + (\alpha^{n_2}+\alpha^{-n_2}) - \cdots +(\alpha^{n_k}+\alpha^{-n_k}),
\]
which we may rewrite as
\[ 1 + \sum_{\substack{i=1\\i \text{ odd}}}^{k-1}  \left[ (\alpha^{n_{i+1}}+\alpha^{-n_{i+1}}) - (\alpha^{n_{i}}+\alpha^{-n_{i}}) \right].
\]
As in the case when $k$ is odd, we apply Lemma \ref{lem:ineq} to conclude that this is a sum of positive quantities, and so cannot be zero.
\end{proof}

This completes the proof of Theorem \ref{L-surgery}, since by the above 
the Alexander polynomial of a knot which produces an $L$-space cannot have a positive real root, and so the knot group is not bi-orderable.

\section{Examples up to 12 crossings}

Of the nontrivial prime knots with $12$ or fewer crossings, $1246$ of them are fibred, according to {\em knotinfo}, and so we may apply Theorems \ref{perron} and \ref{fibknot}. Among these fibred knots, we find that $487$ of them have non bi-orderable groups because their Alexander polynomials have no roots in $\R_+$, while $12$ have bi-orderable groups because all the roots are real and positive.  The bi-orderability of the remaining ones, whose Alexander polynomials have some, but not all, roots real and positive, is not known to us.

The table below contains all nontrivial prime knots with $12$ or fewer crossings whose groups are known to be bi-orderable.  The diagrams were produced using Rob Scharein's program {\em Knotplot}.

{
\renewcommand{\arraystretch}{2.2}
\begin{longtable}{ccc}
	\hline
\nopagebreak Knot&  & Alexander polynomial \\
  \hline

  \multirow{3}{*}{\includegraphics[height=20mm]{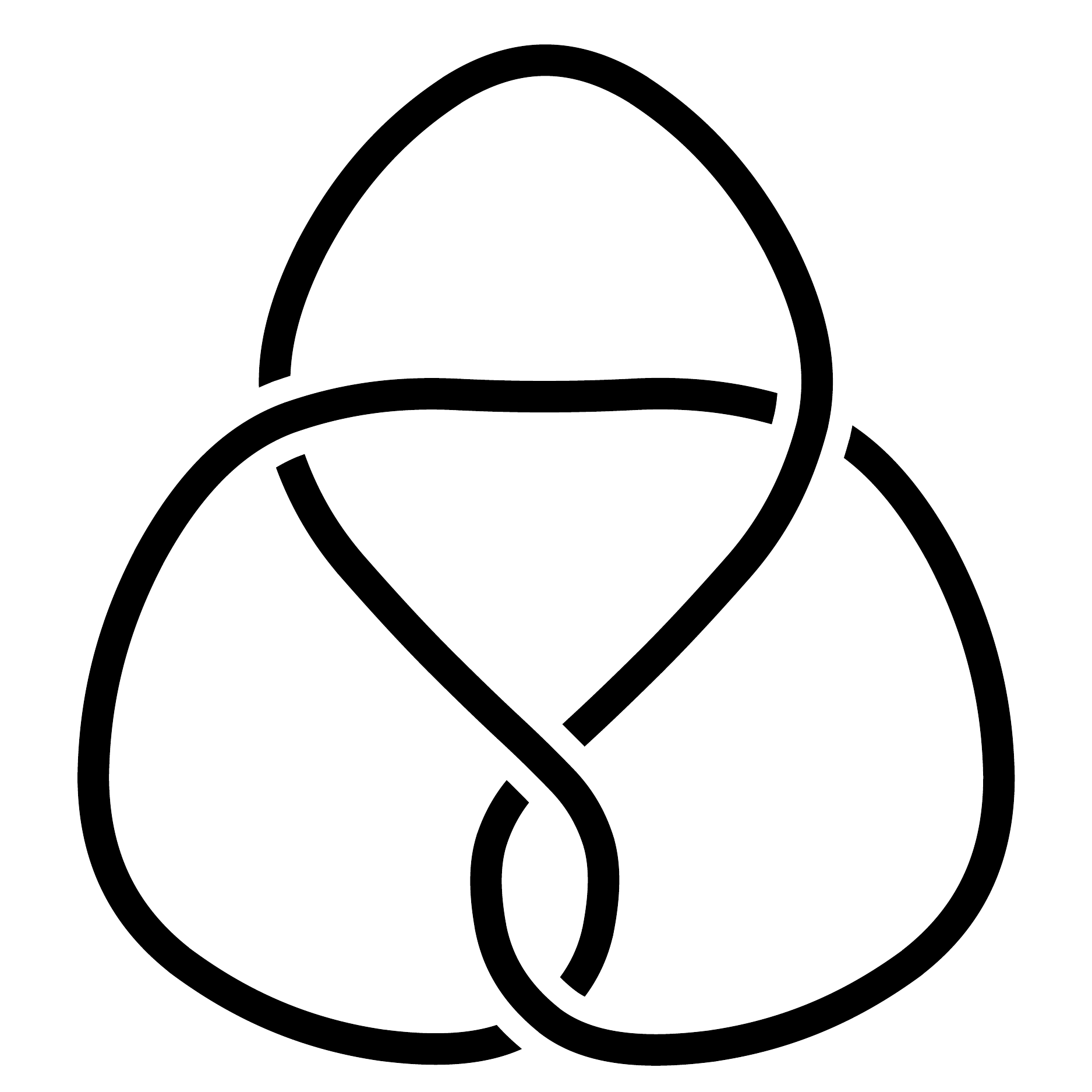}}&     \\
\nopagebreak   & $4_1$ & $1-3t+t^2$ \\
\nopagebreak  &  &  \\
  \multirow{3}{*}{\includegraphics[height=20mm]{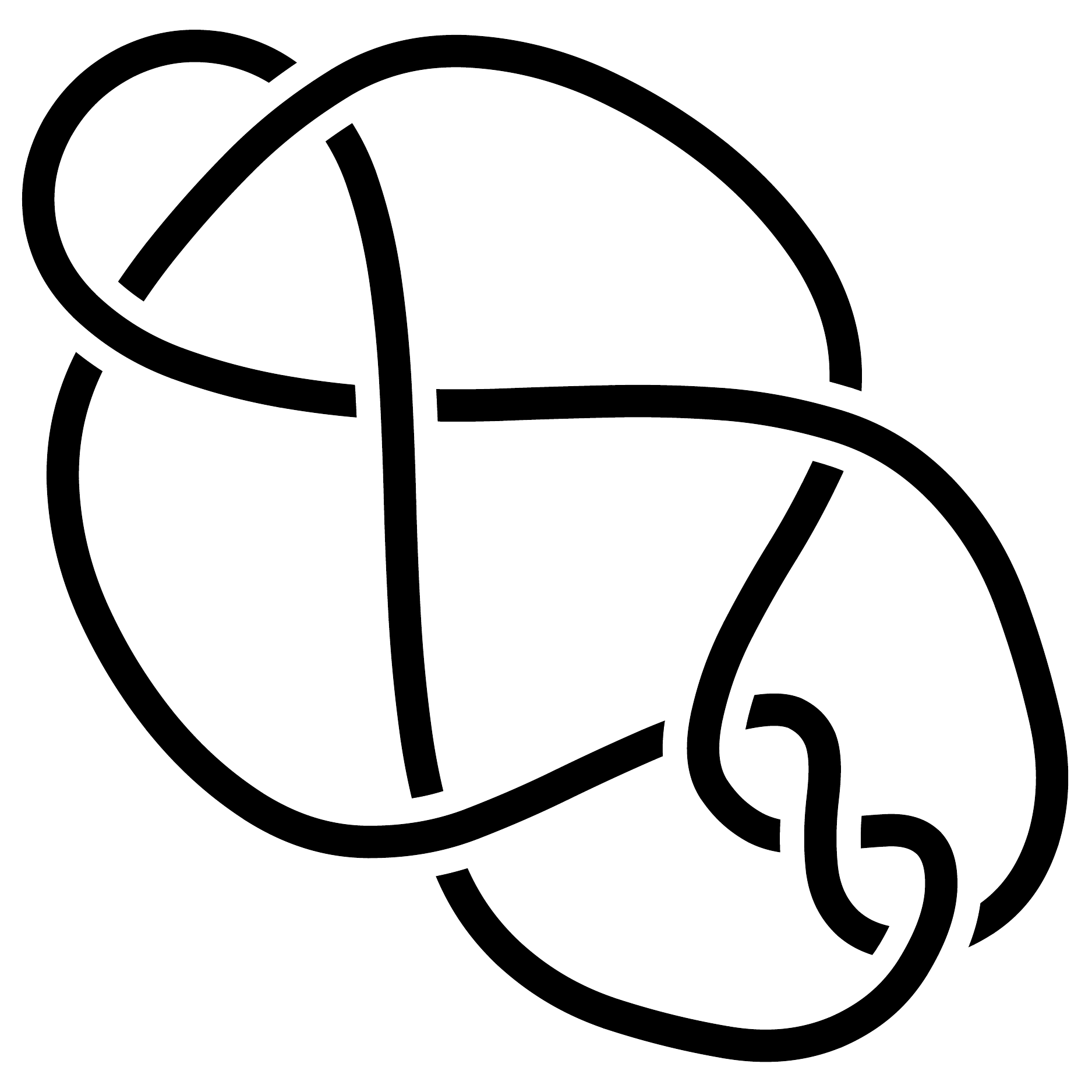}} &   \\
\nopagebreak   & $8_{12}$ & $1-7t+13t^2-7t^3+t^4$  \\
\nopagebreak  &  &  \\
  \multirow{3}{*}{\includegraphics[height=20mm]{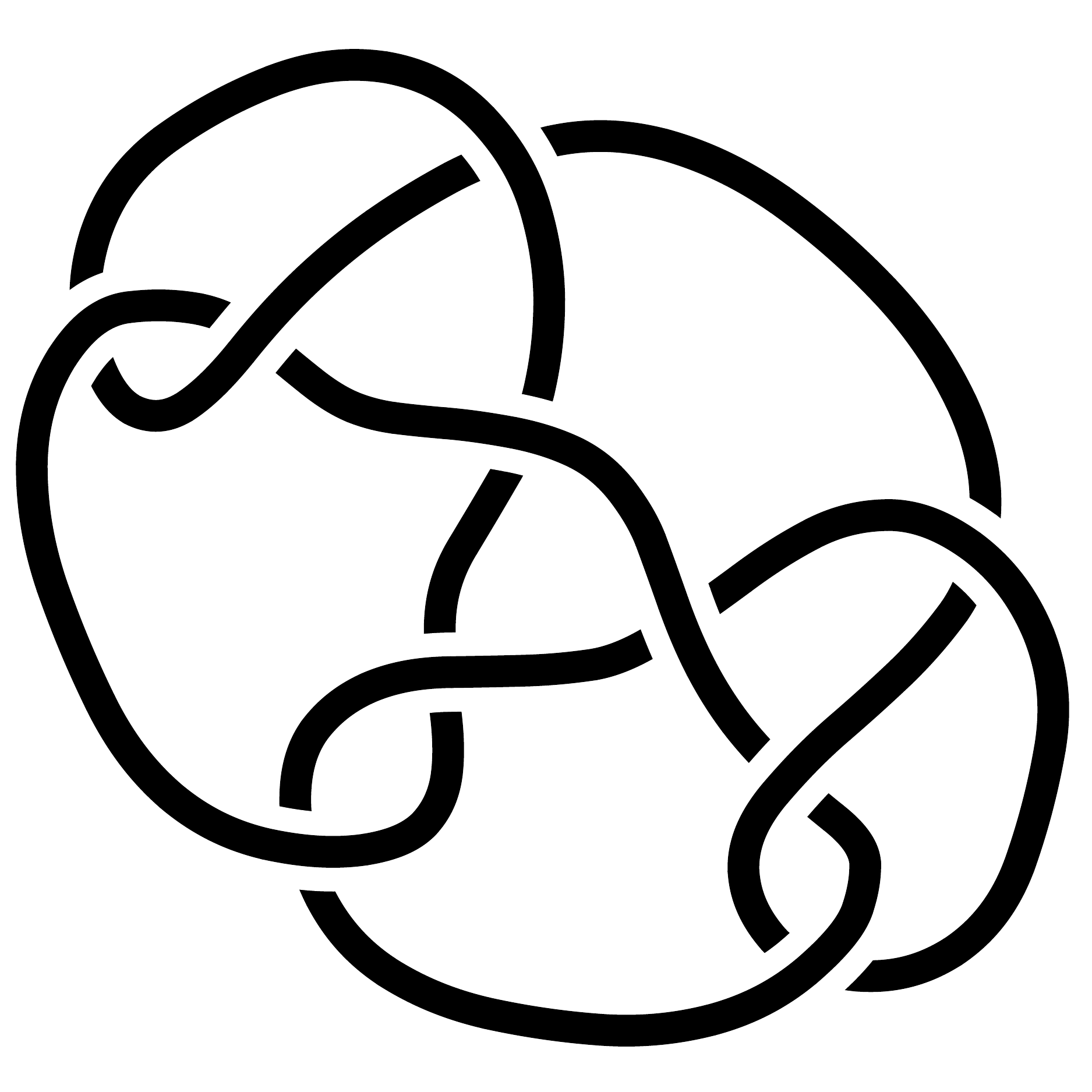}}&   \\
\nopagebreak   & $10_{137}$ & $1-6t+11t^2-6t^3+t^4$ \\
\nopagebreak  &  &  \\
  \multirow{3}{*}{\includegraphics[height=20mm]{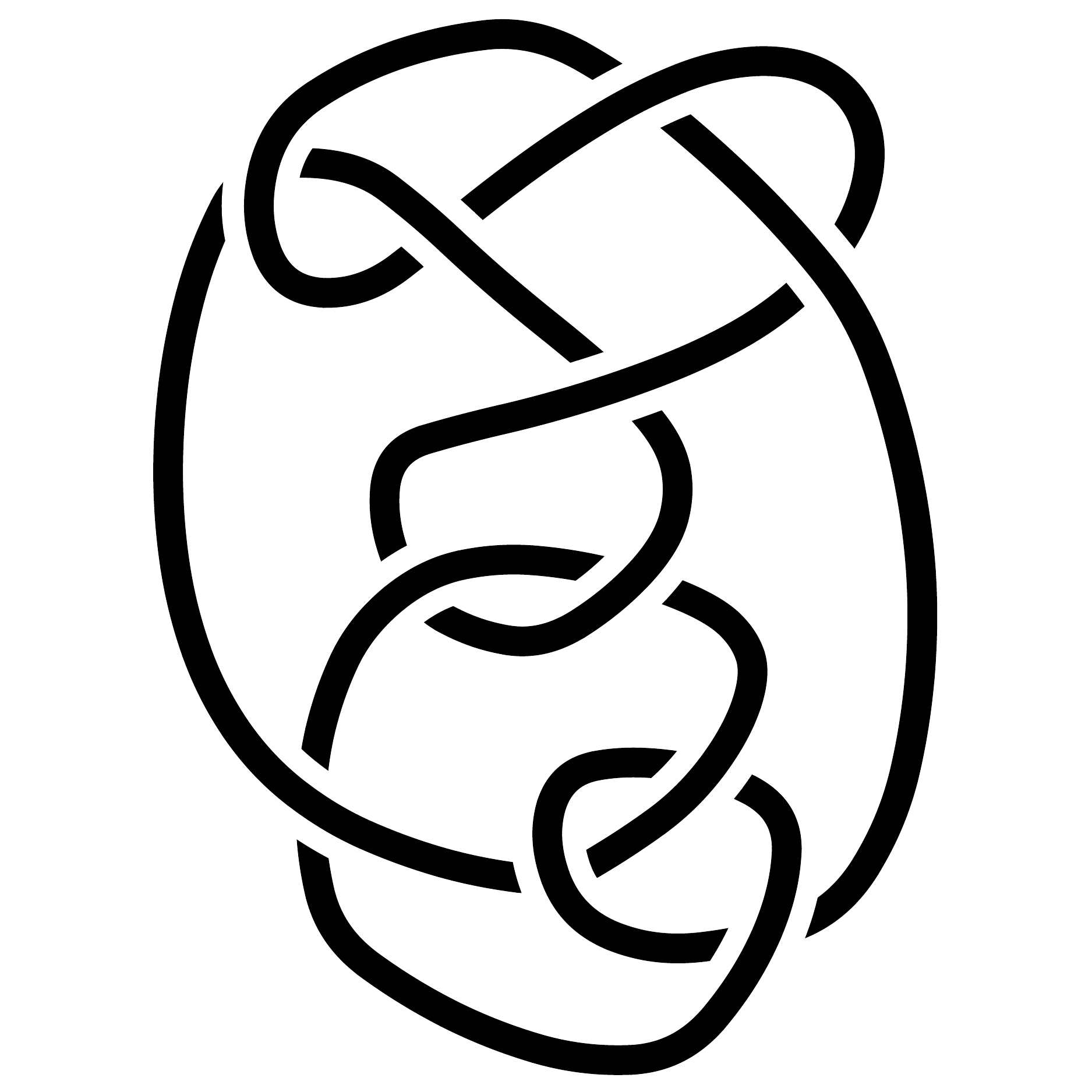}}&    \\
\nopagebreak   & $11a_{5}$ & $1-9t+30t^2-45t^3+30t^4-9t^5+t^6$  \\
\nopagebreak  &  & \\
\multirow{3}{*}{\includegraphics[height=20mm]{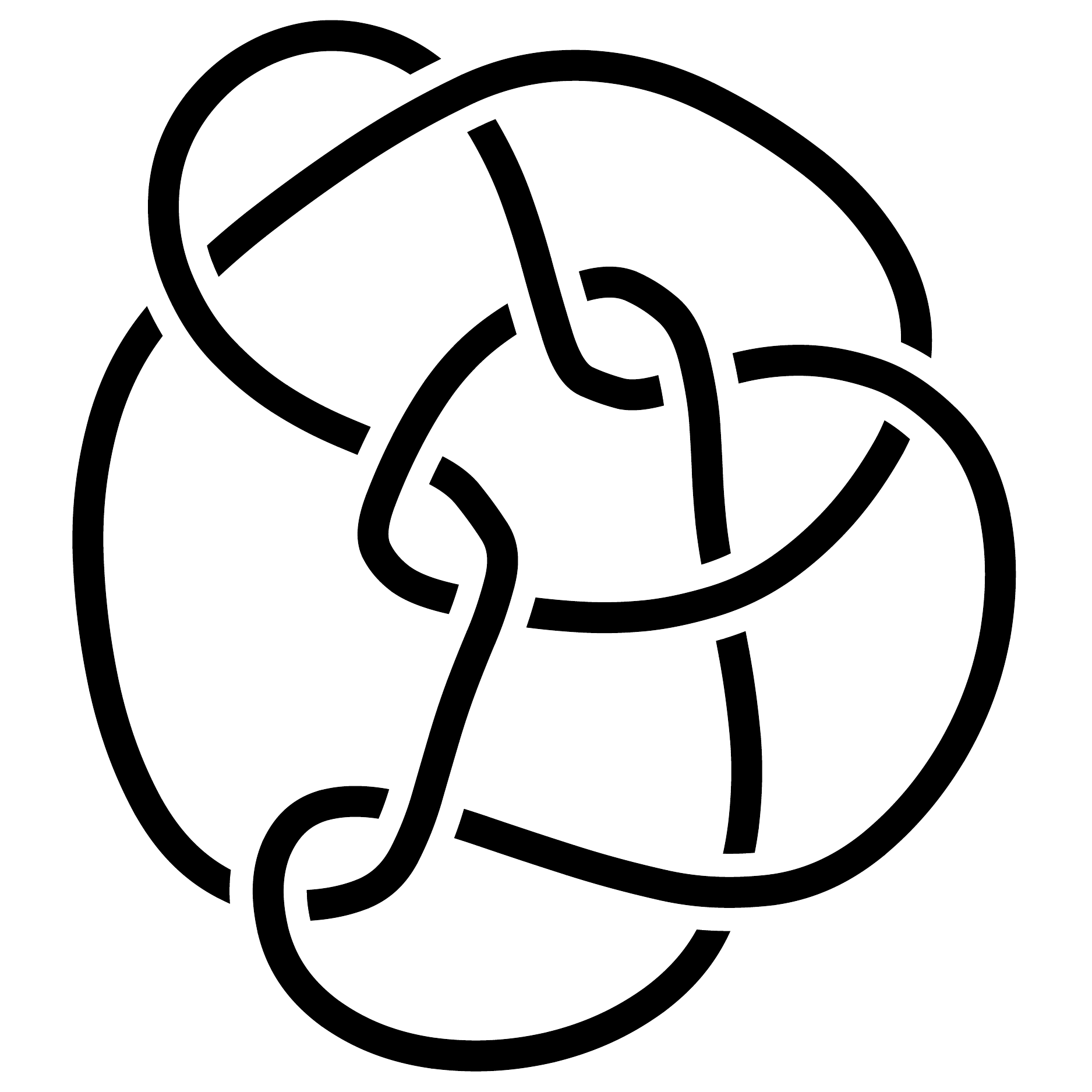}}&   \\
\nopagebreak   & $11n_{142}$ & $1-8t+15t^2-8t^3+t^4$ \\
\nopagebreak  &  &  \\
\multirow{3}{*}{\includegraphics[height=20mm]{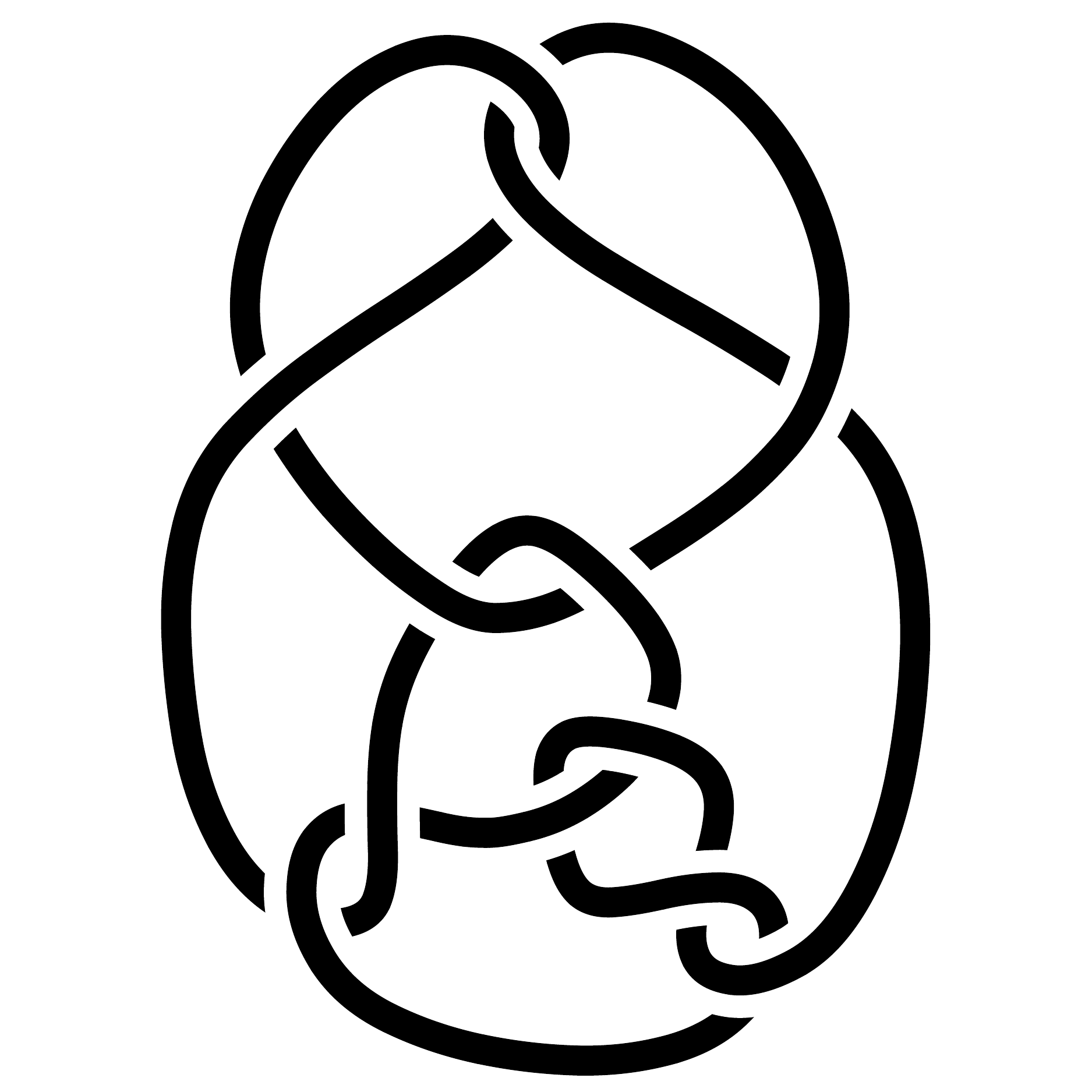}}&  \\
\nopagebreak   & $12a_{0125}$ & $1-12t+44t^2-67t^3+44t^4-12t^5+t^6$ \\
\nopagebreak  &  &  \\
\multirow{3}{*}{\includegraphics[height=20mm]{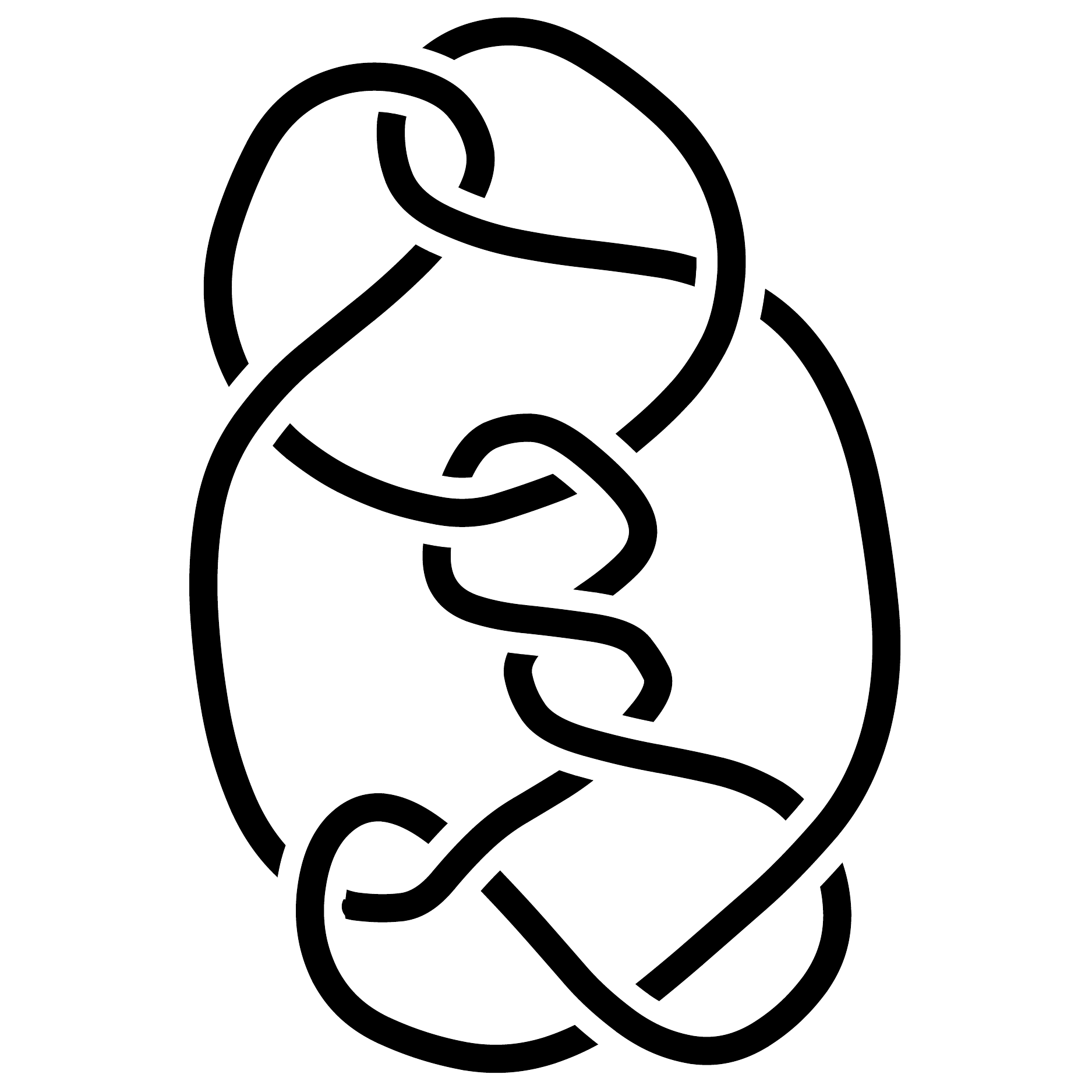}}&   \\
\nopagebreak   & $12a_{0181}$ & $1-11t+40t^2-61t^3+40t^4-11t^5+t^6$ \\
\nopagebreak  &  &  \\
\multirow{3}{*}{\includegraphics[height=20mm]{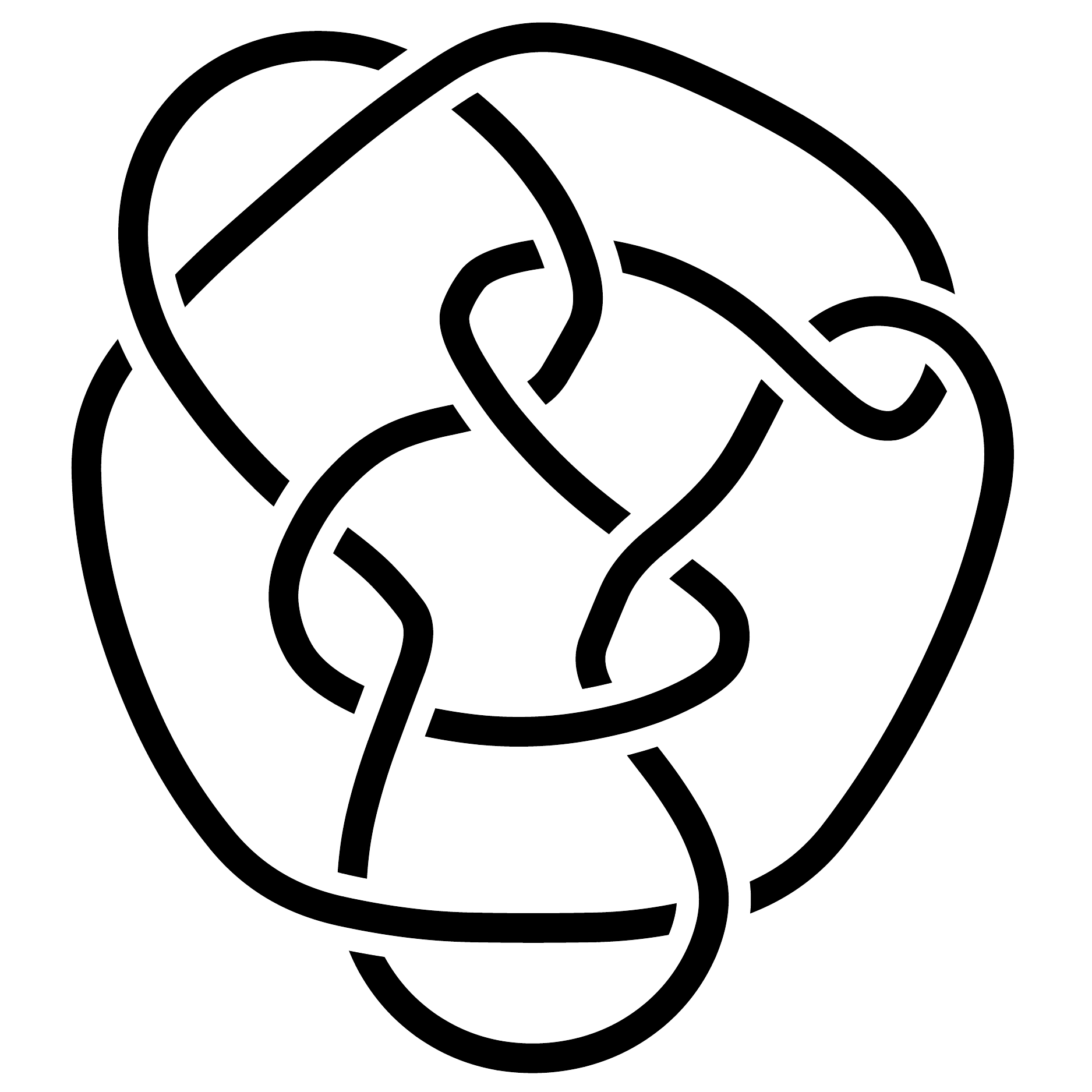}}&   \\
\nopagebreak   & $12a_{1124}$ & $1-13t+50t^2-77t^3+50t^4-13t^5+t^6$ \\
\nopagebreak  &  & \\
\multirow{3}{*}{\includegraphics[height=20mm]{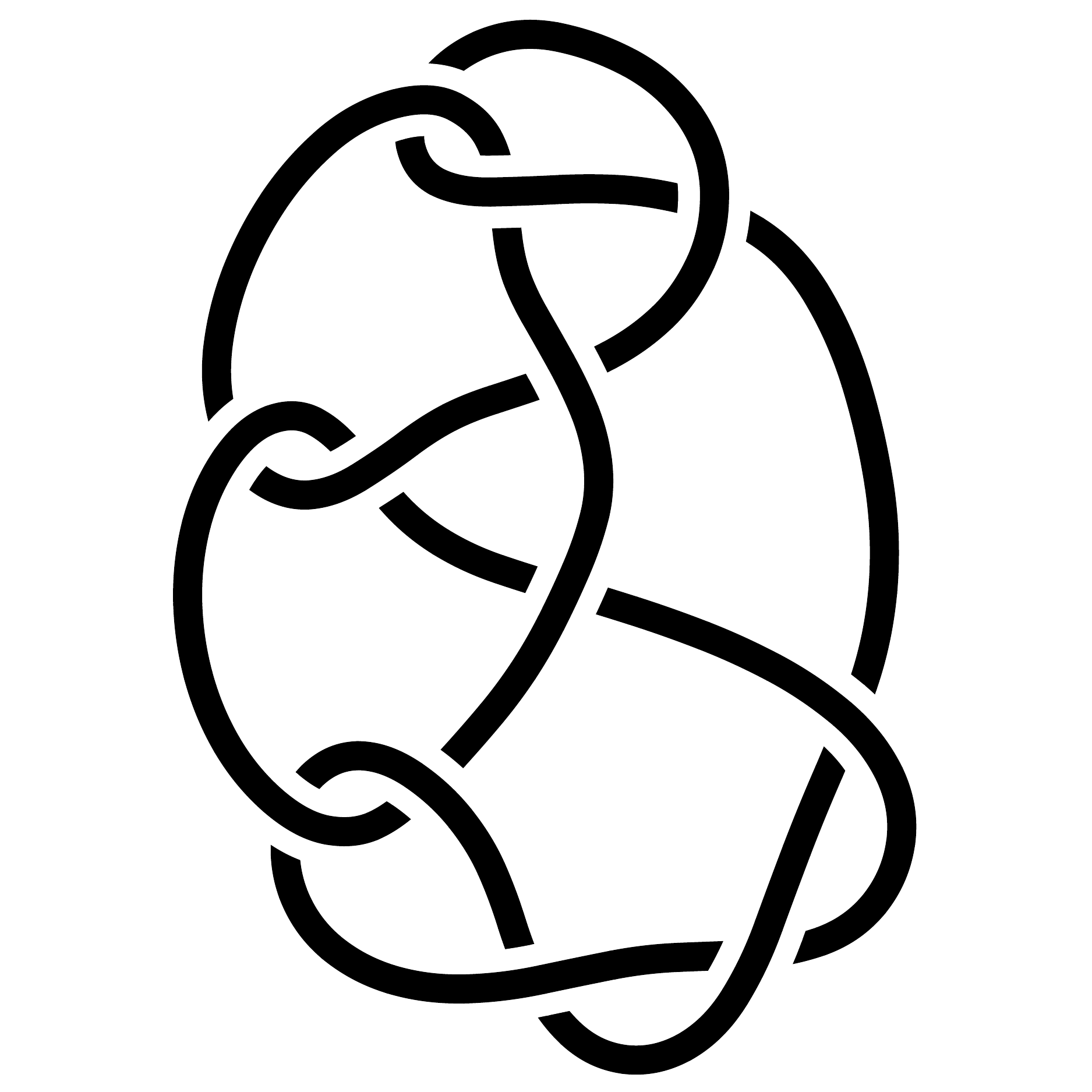}}&   \\
\nopagebreak   & $12n_{0013}$ &  $1-7t+13t^2-7t^3+t^4$\\
\nopagebreak  &  &  \\
\multirow{3}{*}{\includegraphics[height=20mm]{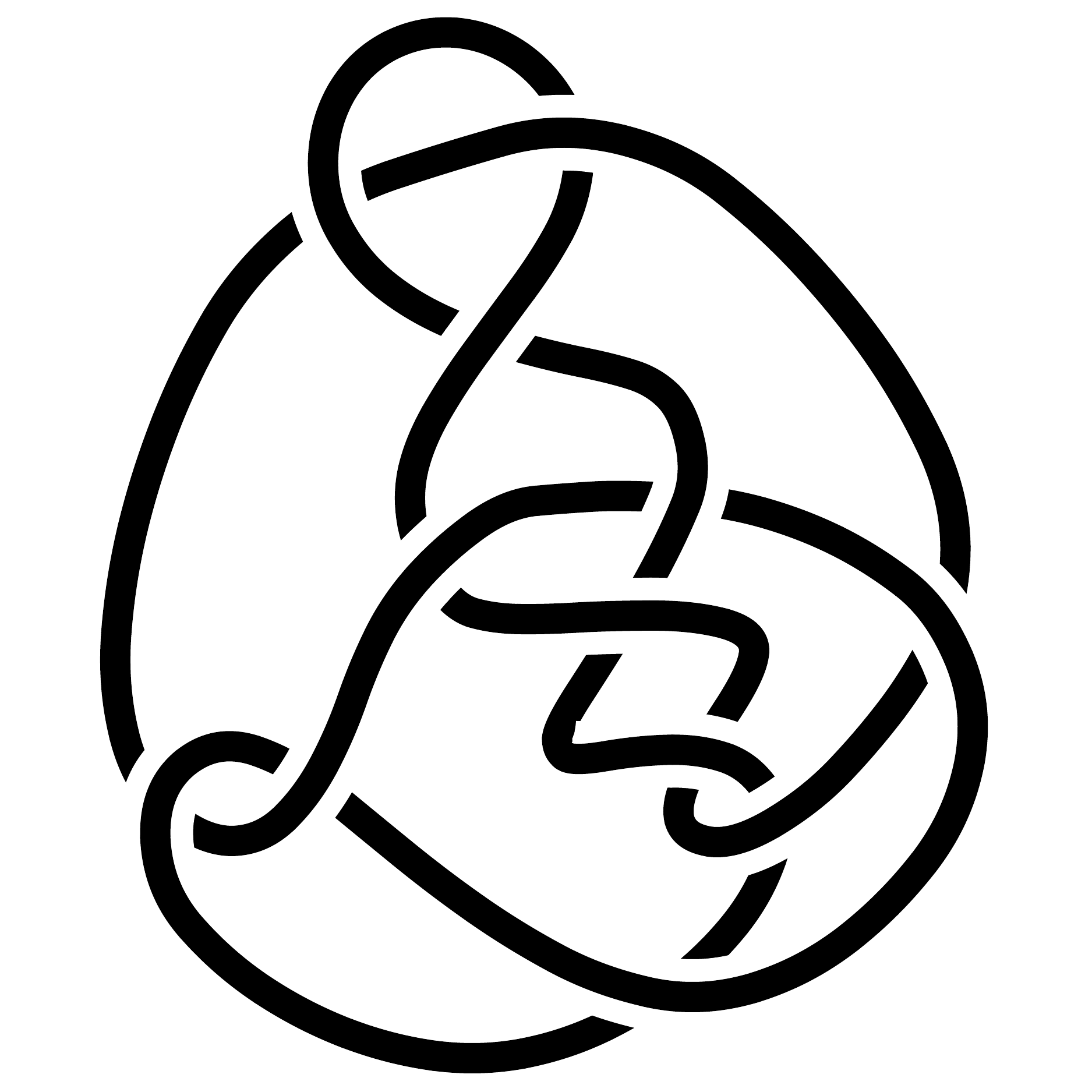}}&   \\
\nopagebreak   & $12n_{0145}$ & $1-6t+11t^2-6t^3+t^4$ \\
\nopagebreak  &  &  \\
\multirow{3}{*}{\includegraphics[height=20mm]{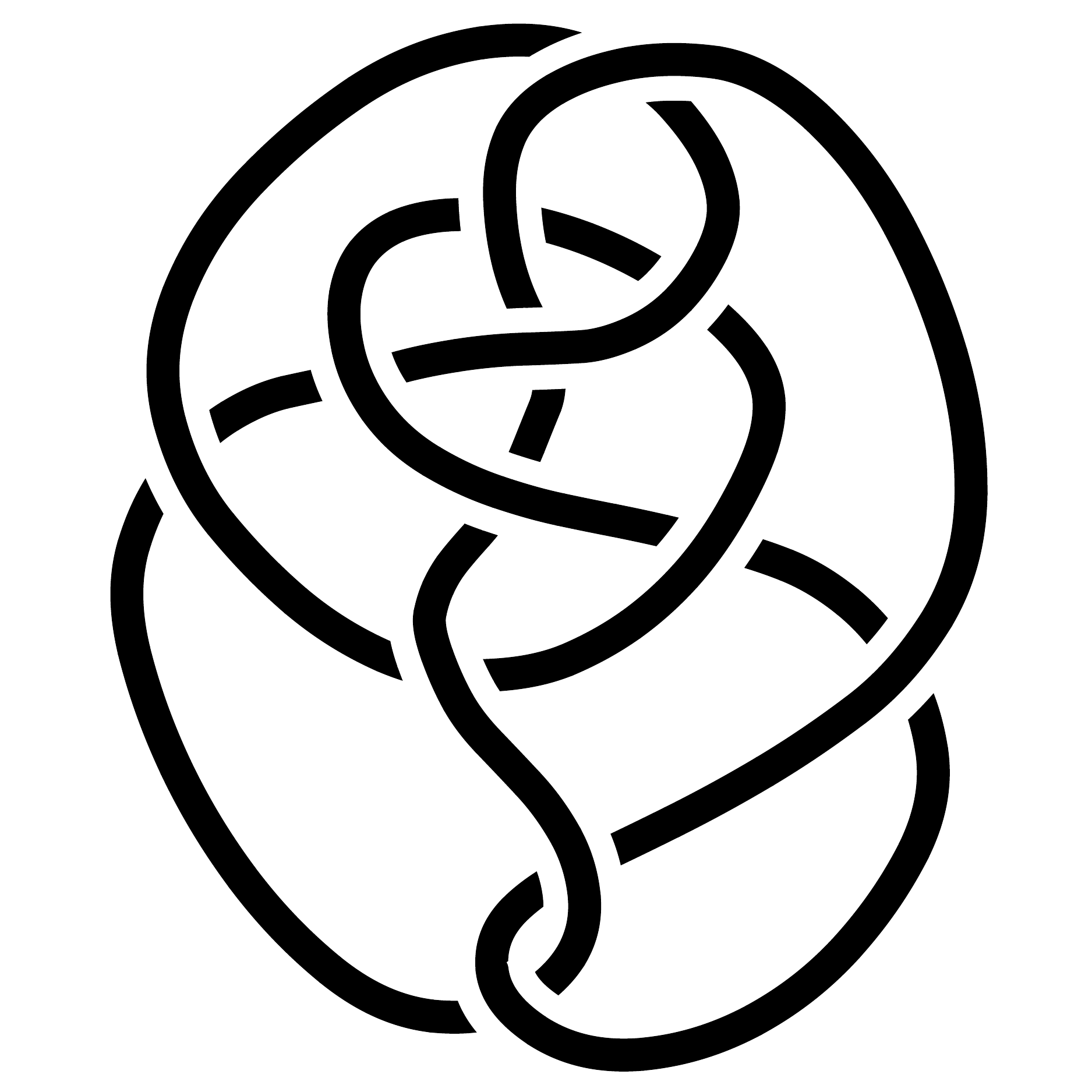}}&    \\
\nopagebreak   & $12n_{0462}$ & $1-6t+11t^2-6t^3+t^4$ \\
\nopagebreak  &  &  \\
\multirow{3}{*}{\includegraphics[height=20mm]{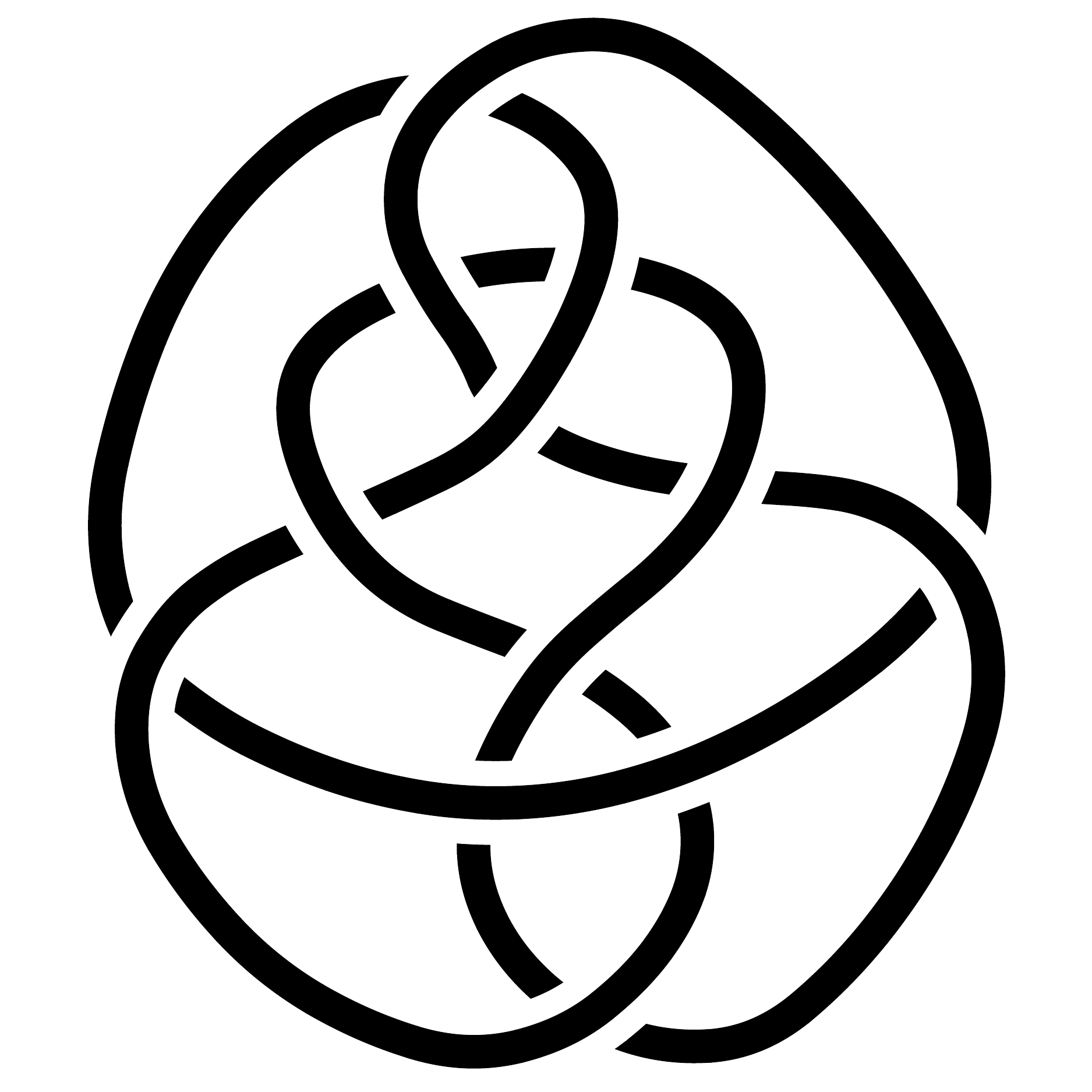}}&    \\
\nopagebreak   & $12n_{0838}$ & $1-6t+11t^2-6t^3+t^4$\\
\nopagebreak  &  &  \\

\end{longtable}
}

The prime knots with $12$ or fewer crossings which are known to have non bi-orderable group, because they are fibred and have Alexander polynomials without positive real roots, are as follows:

$3_{1}$, $5_{1}$, $6_{3}$, $7_{1}$, $7_{7}$, $8_{7}$, $8_{10}$, $8_{16}$, $8_{19}$, $8_{20}$, $9_{1}$, $9_{17}$, $9_{22}$, $9_{26}$, $9_{28}$, $9_{29}$, $9_{31}$, $9_{32}$, $9_{44}$, $9_{47}$, $10_{5}$, $10_{17}$, $10_{44}$, $10_{47}$, $10_{48}$, $10_{62}$, $10_{69}$, $10_{73}$, $10_{79}$, $10_{85}$, $10_{89}$, $10_{91}$, $10_{99}$, $10_{100}$, $10_{104}$, $10_{109}$, $10_{118}$, $10_{124}$, $10_{125}$, $10_{126}$, $10_{132}$, $10_{139}$, $10_{140}$, $10_{143}$, $10_{145}$, $10_{148}$, $10_{151}$, $10_{152}$, $10_{153}$, $10_{154}$, $10_{156}$, $10_{159}$, $10_{161}$, $10_{163}$, $11a_{9}$,  $11a_{14}$, $11a_{22}$,  $11a_{24}$, $11a_{26}$, $11a_{35}$, $11a_{40}$, $11a_{44}$, $11a_{47}$, $11a_{53}$, $11a_{72}$, $11a_{73}$, $11a_{74}$, $11a_{76}$, $11a_{80}$, $11a_{83}$, $11a_{88}$, $11a_{106}$, $11a_{109}$, $11a_{113}$, $11a_{121}$, $11a_{126}$, $11a_{127}$, $11a_{129}$, $11a_{160}$, $11a_{170}$, $11a_{175}$, $11a_{177}$, $11a_{179}$, $11a_{180}$, $11a_{182}$, $11a_{189}$, $11a_{194}$, $11a_{215}$, $11a_{233}$, $11a_{250}$, $11a_{251}$, $11a_{253}$, $11a_{257}$, $11a_{261}$, $11a_{266}$, $11a_{274}$, $11a_{287}$, $11a_{288}$, $11a_{289}$, $11a_{293}$, $11a_{300}$, $11a_{302}$, $11a_{306}$, $11a_{315}$, $11a_{316}$, $11a_{326}$, $11a_{330}$, $11a_{332}$, $11a_{346}$, $11a_{367}$,  $11n_{7}$,  $11n_{11}$,  $11n_{12}$, $11n_{15}$, $11n_{22}$, $11n_{23}$, $11n_{24}$, $11n_{25}$, $11n_{28}$, $11n_{41}$, $11n_{47}$, $11n_{52}$, $11n_{54}$, $11n_{56}$, $11n_{58}$, $11n_{61}$, $11n_{74}$, $11n_{76}$, $11n_{77}$, $11n_{78}$, $11n_{82}$, $11n_{87}$, $11n_{92}$, $11n_{96}$, $11n_{106}$, $11n_{107}$, $11n_{112}$, $11n_{124}$, $11n_{125}$, $11n_{127}$, $11n_{128}$, $11n_{129}$, $11n_{131}$, $11n_{133}$, $11n_{145}$, $11n_{146}$, $11n_{147}$, $11n_{149}$, $11n_{153}$, $11n_{154}$, $11n_{158}$, $11n_{159}$, $11n_{160}$, $11n_{167}$, $11n_{168}$, $11n_{173}$, $11n_{176}$, $11n_{182}$, $11n_{183}$, $12a_{0001}$, $12a_{0008}$, $12a_{0011}$, $12a_{0013}$, $12a_{0015}$, $12a_{0016}$, $12a_{0020}$, $12a_{0024}$, $12a_{0026}$, $12a_{0030}$, $12a_{0033}$, $12a_{0048}$, $12a_{0058}$, $12a_{0060}$, $12a_{0066}$, $12a_{0070}$, $12a_{0077}$, $12a_{0079}$, $12a_{0080}$, $12a_{0091}$, $12a_{0099}$, $12a_{0101}$, $12a_{0111}$, $12a_{0115}$, $12a_{0119}$, $12a_{0134}$, $12a_{0139}$, $12a_{0141}$, $12a_{0142}$, $12a_{0146}$, $12a_{0157}$, $12a_{0184}$, $12a_{0186}$, $12a_{0188}$, $12a_{0190}$, $12a_{0209}$, $12a_{0214}$, $12a_{0217}$, $12a_{0219}$, $12a_{0222}$, $12a_{0245}$, $12a_{0246}$, $12a_{0250}$, $12a_{0261}$, $12a_{0265}$, $12a_{0268}$, $12a_{0271}$, $12a_{0281}$, $12a_{0299}$, $12a_{0316}$, $12a_{0323}$, $12a_{0331}$, $12a_{0333}$, $12a_{0334}$, $12a_{0349}$, $12a_{0351}$, $12a_{0362}$, $12a_{0363}$, $12a_{0369}$, $12a_{0374}$, $12a_{0386}$, $12a_{0396}$, $12a_{0398}$, $12a_{0426}$, $12a_{0439}$, $12a_{0452}$, $12a_{0464}$, $12a_{0466}$, $12a_{0469}$, $12a_{0473}$, $12a_{0476}$, $12a_{0477}$, $12a_{0479}$, $12a_{0497}$, $12a_{0499}$, $12a_{0515}$, $12a_{0536}$, $12a_{0561}$, $12a_{0565}$, $12a_{0569}$, $12a_{0576}$, $12a_{0579}$, $12a_{0629}$, $12a_{0662}$, $12a_{0696}$, $12a_{0697}$, $12a_{0699}$, $12a_{0700}$, $12a_{0706}$, $12a_{0707}$, $12a_{0716}$, $12a_{0815}$, $12a_{0824}$, $12a_{0835}$, $12a_{0859}$, $12a_{0864}$, $12a_{0867}$, $12a_{0878}$, $12a_{0898}$, $12a_{0916}$, $12a_{0928}$, $12a_{0935}$, $12a_{0981}$, $12a_{0984}$, $12a_{0999}$, $12a_{1002}$, $12a_{1013}$, $12a_{1027}$, $12a_{1047}$, $12a_{1065}$, $12a_{1076}$, $12a_{1105}$, $12a_{1114}$, $12a_{1120}$, $12a_{1122}$, $12a_{1128}$, $12a_{1168}$, $12a_{1176}$, $12a_{1188}$, $12a_{1203}$, $12a_{1219}$, $12a_{1220}$, $12a_{1221}$, $12a_{1226}$, $12a_{1227}$, $12a_{1230}$, $12a_{1238}$, $12a_{1246}$, $12a_{1248}$, $12a_{1253}$, $12n_{0005}$, $12n_{0006}$, $12n_{0007}$, $12n_{0010}$, $12n_{0016}$, $12n_{0019}$, $12n_{0020}$, $12n_{0038}$, $12n_{0041}$, $12n_{0042}$, $12n_{0052}$, $12n_{0064}$, $12n_{0070}$, $12n_{0073}$, $12n_{0090}$, $12n_{0091}$, $12n_{0092}$, $12n_{0098}$, $12n_{0104}$, $12n_{0105}$, $12n_{0106}$, $12n_{0113}$, $12n_{0115}$, $12n_{0120}$, $12n_{0121}$, $12n_{0125}$, $12n_{0135}$, $12n_{0136}$, $12n_{0137}$, $12n_{0139}$, $12n_{0142}$, $12n_{0148}$, $12n_{0150}$, $12n_{0151}$, $12n_{0156}$, $12n_{0157}$, $12n_{0165}$, $12n_{0174}$, $12n_{0175}$, $12n_{0184}$, $12n_{0186}$, $12n_{0187}$, $12n_{0188}$, $12n_{0190}$, $12n_{0192}$, $12n_{0198}$, $12n_{0199}$, $12n_{0205}$, $12n_{0226}$, $12n_{0230}$, $12n_{0233}$, $12n_{0235}$, $12n_{0242}$, $12n_{0261}$, $12n_{0272}$, $12n_{0276}$, $12n_{0282}$, $12n_{0285}$, $12n_{0296}$, $12n_{0309}$, $12n_{0318}$, $12n_{0326}$, $12n_{0327}$, $12n_{0328}$, $12n_{0329}$, $12n_{0344}$, $12n_{0346}$, $12n_{0347}$, $12n_{0348}$, $12n_{0350}$, $12n_{0352}$, $12n_{0354}$, $12n_{0355}$, $12n_{0362}$, $12n_{0366}$, $12n_{0371}$, $12n_{0372}$, $12n_{0377}$, $12n_{0390}$, $12n_{0392}$, $12n_{0401}$, $12n_{0402}$, $12n_{0405}$, $12n_{0409}$, $12n_{0416}$, $12n_{0417}$, $12n_{0423}$, $12n_{0425}$, $12n_{0426}$, $12n_{0427}$, $12n_{0437}$, $12n_{0439}$, $12n_{0449}$, $12n_{0451}$, $12n_{0454}$, $12n_{0456}$, $12n_{0458}$, $12n_{0459}$, $12n_{0460}$, $12n_{0466}$, $12n_{0468}$, $12n_{0472}$, $12n_{0475}$, $12n_{0484}$, $12n_{0488}$, $12n_{0495}$, $12n_{0505}$, $12n_{0506}$, $12n_{0508}$, $12n_{0514}$, $12n_{0517}$, $12n_{0518}$, $12n_{0522}$, $12n_{0526}$, $12n_{0528}$, $12n_{0531}$, $12n_{0538}$, $12n_{0543}$, $12n_{0549}$, $12n_{0555}$, $12n_{0558}$, $12n_{0570}$, $12n_{0574}$, $12n_{0577}$, $12n_{0579}$, $12n_{0582}$, $12n_{0591}$, $12n_{0592}$, $12n_{0598}$, $12n_{0601}$, $12n_{0604}$, $12n_{0609}$, $12n_{0610}$, $12n_{0613}$, $12n_{0619}$, $12n_{0621}$, $12n_{0623}$, $12n_{0627}$, $12n_{0629}$, $12n_{0634}$, $12n_{0640}$, $12n_{0641}$, $12n_{0642}$, $12n_{0647}$, $12n_{0649}$, $12n_{0657}$, $12n_{0658}$, $12n_{0660}$, $12n_{0666}$, $12n_{0668}$, $12n_{0670}$, $12n_{0672}$, $12n_{0673}$, $12n_{0675}$, $12n_{0679}$, $12n_{0681}$, $12n_{0683}$, $12n_{0684}$, $12n_{0686}$, $12n_{0688}$, $12n_{0690}$, $12n_{0694}$, $12n_{0695}$, $12n_{0697}$, $12n_{0703}$, $12n_{0707}$, $12n_{0708}$, $12n_{0709}$, $12n_{0711}$, $12n_{0717}$, $12n_{0719}$, $12n_{0721}$, $12n_{0725}$, $12n_{0730}$, $12n_{0739}$, $12n_{0747}$, $12n_{0749}$, $12n_{0751}$, $12n_{0754}$, $12n_{0761}$, $12n_{0762}$, $12n_{0781}$, $12n_{0790}$, $12n_{0791}$, $12n_{0798}$, $12n_{0802}$, $12n_{0803}$, $12n_{0835}$, $12n_{0837}$, $12n_{0839}$, $12n_{0842}$, $12n_{0848}$, $12n_{0850}$, $12n_{0852}$, $12n_{0866}$, $12n_{0871}$, $12n_{0887}$, $12n_{0888}$.

\bibliographystyle{plain}

\bibliography{eigenvalues}

\begin{thebibliography}{10}

\bibitem{boyd}
David~W. Boyd.
\newblock Speculations concerning the range of {M}ahler's measure.
\newblock {\em Canad. Math. Bull.}, 24(4):453--469, 1981.

\bibitem{BRW}
Steven Boyer, Dale Rolfsen, and Bert Wiest.
\newblock Orderable 3-manifold groups.
\newblock {\em Ann. Inst. Fourier (Grenoble)}, 55(1):243--288, 2005.

\bibitem{calegari}
Danny Calegari.
\newblock {\em Foliations and the geometry of 3-manifolds}.
\newblock Oxford Mathematical Monographs. Oxford University Press, Oxford,
  2007.

\bibitem{calegari-dunfield}
Danny Calegari and Nathan~M. Dunfield.
\newblock Laminations and groups of homeomorphisms of the circle.
\newblock {\em Invent. Math.}, 152(1):149--204, 2003.

\bibitem{conrad}
Paul Conrad.
\newblock Right-ordered groups.
\newblock {\em Michigan Math. J.}, 6:267--275, 1959.

\bibitem{dehornoy}
Patrick Dehornoy.
\newblock Braid groups and left distributive operations.
\newblock {\em Trans. Amer. Math. Soc.}, 345(1):115--150, 1994.

\bibitem{DDRW}
Patrick Dehornoy, Ivan Dynnikov, Dale Rolfsen, and Bert Wiest.
\newblock {\em Ordering Braids}, volume 148 of {\em Surveys and Monographs}.
\newblock American Mathematical Society, Providence, RI, 2008.

\bibitem{FS}
Ronald Fintushel and Ronald~J. Stern.
\newblock Constructing lens spaces by surgery on knots.
\newblock {\em Math. Z.}, 175(1):33--51, 1980.

\bibitem{ghys}
{\'E}tienne Ghys.
\newblock Groups acting on the circle.
\newblock {\em Enseign. Math. (2)}, 47(3-4):329--407, 2001.

\bibitem{hironaka}
Eriko Hironaka.
\newblock The {L}ehmer polynomial and pretzel links.
\newblock {\em Canad. Math. Bull.}, 44(4):440--451, 2001.

\bibitem{holder}
O.~H\"older.
\newblock Die {A}xiome {Q}uantit\"at und die {L}ehre vom {M}ass.
\newblock {\em Ber. Verh. S\"achs. Wiss. Leipzig, Math.-Phis.}, 53:1--64, 1901.

\bibitem{howie-short}
James Howie and Hamish Short.
\newblock The band-sum problem.
\newblock {\em J. London Math. Soc. (2)}, 31(3):571--576, 1985.

\bibitem{ito}
Tetsuya Ito.
\newblock Braid ordering and knot genus.
\newblock Preprint, available via http://arxiv.org/abs/0805.2042v3.

\bibitem{kirby}
Rob Kirby.
\newblock Problems in low-dimensional topology.
\newblock In {\em Geometric topology ({A}thens, {GA}, 1993)}, volume~2 of {\em
  AMS/IP Stud. Adv. Math.}, pages 35--473. Amer. Math. Soc., Providence, RI,
  1997.
\newblock Updated version available from the author's website.

\bibitem{LRR}
Peter~A. Linnell, Akbar~H. Rhemtulla, and Dale P.~O. Rolfsen.
\newblock Invariant group orderings and {G}alois conjugates.
\newblock {\em Journal of Algebra}, 319(12):4891--4898, 2008.

\bibitem{malyut}
A.~V. Malyutin and N.~Yu. Netsvetaev.
\newblock Dehornoy order in the braid group and transformations of closed
  braids.
\newblock {\em Algebra i Analiz}, 15(3):170--187, 2003.

\bibitem{rhem}
Roberta~Botto Mura and Akbar Rhemtulla.
\newblock {\em Orderable groups}.
\newblock Marcel Dekker Inc., New York, 1977.
\newblock Lecture Notes in Pure and Applied Mathematics, Vol. 27.

\bibitem{navas}
Andr\'{e}s Navas.
\newblock On the dynamics of (left) orderable groups.
\newblock Preprint, available via http://arxiv.org/abs/0710.2466.

\bibitem{ni}
Yi~Ni.
\newblock Knot {F}loer homology detects fibred knots.
\newblock {\em Invent. Math.}, 170(3):577--608, 2007.

\bibitem{OS}
Peter Ozsv{\'a}th and Zolt{\'a}n Szab{\'o}.
\newblock Holomorphic disks and three-manifold invariants: properties and
  applications.
\newblock {\em Ann. of Math. (2)}, 159(3):1159--1245, 2004.

\bibitem{OS2}
Peter Ozsv{\'a}th and Zolt{\'a}n Szab{\'o}.
\newblock On knot {F}loer homology and lens space surgeries.
\newblock {\em Topology}, 44(6):1281--1300, 2005.

\bibitem{PR1}
Bernard Perron and Dale Rolfsen.
\newblock On orderability of fibred knot groups.
\newblock {\em Math. Proc. Cambridge Philos. Soc.}, 135(1):147--153, 2003.

\bibitem{PR2}
Bernard Perron and Dale Rolfsen.
\newblock Invariant ordering of surface groups and 3-manifolds which fibre over
  {$S\sp 1$}.
\newblock {\em Math. Proc. Cambridge Philos. Soc.}, 141(2):273--280, 2006.

\bibitem{peters}
Thomas Peters.
\newblock On {L}-spaces and non left-orderable 3-manifold groups.
\newblock Preprint, available via http://arxiv.org/abs/0903.4495.

\bibitem{roberts}
R.~Roberts, J.~Shareshian, and M.~Stein.
\newblock Infinitely many hyperbolic 3-manifolds which contain no {R}eebless
  foliation.
\newblock {\em J. Amer. Math. Soc.}, 16(3):639--679 (electronic), 2003.

\bibitem{KL}
Dale Rolfsen.
\newblock {\em Knots and links}.
\newblock AMS Chelsea, 2003.

\bibitem{rol}
Dale Rolfsen.
\newblock Mappings of nonzero degree between 3-manifolds: a new obstruction.
\newblock In {\em Advances in topological quantum field theory}, volume 179 of
  {\em NATO Sci. Ser. II Math. Phys. Chem.}, pages 267--273. Kluwer Acad.
  Publ., Dordrecht, 2004.

\bibitem{watson}
Liam Watson.
\newblock Involutions of 3-manifolds and {K}hovanov homology.
\newblock PhD dissertation, available via
  http://www.math.ucla.edu/~lwatson/pages/pdf/thesisPhD-final.pdf.

\end{thebibliography}

\end{document}